\newlength{\itemlaenge}
\newtheoremstyle{mytheorem}% name
  {}%      Space above, empty = `usual value'
  {}%      Space below
  {\slshape}% Body font
  {}%         Indent amount (empty = no indent, \parindent = para indent)
 {\scshape}% Thm head font
  {.}%        Punctuation after head
  { }%     Space after thm head: " " = normal interword space;
\newtheoremstyle{mydefinition}% name
  {}%      Space above, empty = `usual value'
  {}%      Space below
  {\upshape}% Body font
  {}%         Indent amount (empty = no indent, \parindent = para indent)
  {\scshape}% Thm head font
  {.}%        Punctuation after thm head
  { }%     Space after thm head: " " = normal interword space;
\theoremstyle{mytheorem}
\newtheorem{lemma}{Lemma}[section]
\newtheorem{prop}[lemma]{Proposition}
\newtheorem*{prop*}{Proposition}
\newtheorem{prop_intro}{Proposition}
\newtheorem{cor_intro}[prop_intro]{Corollary}
\newtheorem{thm_intro}[prop_intro]{Theorem}
\newtheorem*{thm*}{Theorem}
\theoremstyle{mydefinition}
\newtheorem{rem}[lemma]{Remark}
\newtheorem*{rem*}{Remark}
\newtheorem{rem_intro}[prop_intro]{Remark}
\newtheorem*{notation*}{Notation}
\newtheorem*{warning*}{Warning}
\newtheorem{defi}[lemma]{Definition}
\newtheorem*{defi*}{Definition}
\numberwithin{equation}{section}
\newcommand{\bqn}{\begin{equation*}}
\newcommand{\eqn}{\end{equation*}}
\newcommand{\bq}{\begin{equation}}
\newcommand{\eq}{\end{equation}}
\newcommand{\ba}{\begin{align}}
\newcommand{\ea}{\end{align}}
\newcommand{\be}{\begin{enumerate}}
\newcommand{\ee}{\end{enumerate}}
\newcommand{\thismonth}{\ifcase\month % case 0 --- impossible!
  \or January\or February\or March\or April\or May\or June%
  \or July\or August\or September\or October\or November%
  \or December\fi}
\newcommand{\EE}{\mathcal{E}}
\newcommand{\En}{\mathcal{E}^{(n)}}
\newcommand{\Ek}{\mathcal{E}^{(k)}}
\newcommand{\Evn}{\mathcal{E}_v^{(n)}}
\newcommand{\Evk}{\mathcal{E}_v^{(k)}}
\newcommand{\Evkone}{\mathcal{E}_v^{(k-1)}}
\newcommand{\Evone}{\mathcal{E}_v^{(1)}}
\newcommand{\TT}{\mathcal{T}}
\newcommand{\aut}{\text{Aut}}
\newcommand{\VV}{\mathcal{V}}
\def\hb{{\rm H}_{\rm b}}
\def\h{{\rm H}}
\def\pr{\mathrm{pr}}
\def\PSL{\mathrm{PSL}}
\def\stab{\operatorname{Stab}}
\def\FF{\mathbb F}
\def\QQ{\mathbb Q}
\def\ZZ{\mathbb Z}
\def\Gg{\mathcal G}
\def\Ll{\mathcal L}
\author[]{Alessandra Iozzi}
\address{Department Mathematik\\
         ETH Zentrum\\
         8092~Z\"urich\\
         }
\email{alessandra.iozzi@math.ethz.ch}
\author[]{Cristina Pagliantini}
\address{Department Mathematik\\
         ETH Zentrum\\
         8092~Z\"urich\\
        }
\email{cristina.pagliantini@math.ethz.ch}
\author[]{Alessandro Sisto}
\address{Department Mathematik\\
         ETH Zentrum\\
         8092~Z\"urich\\
        }
\email{sisto@math.ethz.ch}
\title[Quasimorphisms on trees]%
      {Characterising actions on trees yielding non-trivial quasimorphisms}
\thanks{All authors were supported by Swiss National Science Foundation project 144373.
}
\begin{document}

\begin{abstract}
We study the construction of quasimorphisms on groups acting on trees introduced by
Monod and Shalom, that we call \emph{median} quasimorphisms, and in particular we fully characterise actions on trees that give rise to non-trivial median quasimorphisms.
Roughly speaking, either the action is highly transitive on geodesics, it fixes a point in the boundary, or there exists an infinite family of non-trivial median quasimorphisms.
In particular, in the last case the second bounded cohomology of the group is infinite dimensional as a vector space.
As an application, we show that a cocompact lattice in a product of trees only has trivial quasimorphisms if and only if both closures of the projections on the two factors are locally $\infty$-transitive. 
\end{abstract}

\maketitle

\section{Introduction}

\medskip
Since its inception in the 80s, bounded cohomology has proven itself to be a very efficient tool
to approach rigidity questions (see for example
\cite{Gromov, Ghys, Matsumoto, Burger_Iozzi_supq, Burger_Iozzi_Wienhard_toledo, Bader_Furman_Sauer}).
While in degree 0 and 1 there are no enlightening differences between bounded and ordinary group cohomology,
in degree two the study of the natural comparison map $\hb^2\to\h^2$ has also proven
to be very fruitful.  For example the surjectivity of the comparison map with all coefficients is 
a characterisation of Gromov hyperbolicity \cite{Mineyev_1, Mineyev_2}, 
while the injectivity is equivalent to the vanishing of the stable commutator length \cite{Bavard}.  
It is hence natural that a lot of attention has been devoted 
to the study of the kernel of the comparison map, that is to say the space of quasimorphisms.

\medskip
In this paper we study the space of quasimorphisms of a group $\Gamma$ acting by automorphisms of a simplicial tree $\TT$ and we denote $\rho:\Gamma\rightarrow \text{Aut}(\TT)$ such an action.  On the one hand we give a trichotomy to show that, roughly speaking,
either the action of $\Gamma$ on $\TT$ is highly transitive on geodesics, it fixes a point 
in the boundary $\partial\TT$ of $\TT$, or there exists an infinite family of non-trivial 
quasimorphisms on $\Gamma$.  As an application, we prove the converse of a result of
Burger and Monod, \cite[Corollary~26]{Burger_Monod_GAFA}, 
thus giving a full characterization of locally $\infty$-transitive groups in terms of quasimorphisms.

\medskip

%Let $\TT$ be a simplicial tree,
%let $\aut(\TT)$ be the group of automorphisms of $\TT$, and $\rho:\Gamma\rightarrow\aut(\TT)$ be 
%an action of a group $\Gamma$ by automorphisms of $\TT$.
%Roughly speaking, we show that the action of $\Gamma$ on $\TT$ either it is widely transitive on 
%the geodesics, or it fixes a point in the boundary  $\bb \TT$ of $\TT$, or
%there exists an infinite family of non trivial quasimorphisms on $\Gamma$.

More precisely, if $n\in \mathbb{N}$, we call $n$-geodesic a geodesic segment of length $n$.
Let $\En$ be the set of connected \emph{oriented} $n$-geodesics in $\TT$ and 
let us consider the isometric action of $\Gamma$ on $\ell^1(\En)$, given by 
$\pi(g)f(s):=f(g^{-1}s)$.
%and let $\En_+$ be the subset
%of connected \emph{positive} oriented $n$-geodesics.
%%%%If $s\in\En$ we denote by $s^{-1}$ the connected geodesic segment $s$ with
%%%%opposite orientation. 
%Let $\overline{V}$ be the set of vertices of 
%the compatification of $\TT$.
%For $1\leq p<\infty$ we define the $\aut(\TT)$-equivariant map
%$$
%\begin{array}{rccl}
%\omega:& V\times V & \longrightarrow & \ell^p(\En)\\
%&(x,y)& \longmapsto & \omega(x,y)(\gamma)=\chi_{\llbracket x,y\rrbracket}(\gamma)-
%\chi_{\llbracket y,x \rrbracket }(\gamma), 
%\end{array}
%$$
%where $\llbracket x,y \rrbracket$  is the set of connected geodesic segments contained
%in the geodesic $[x,y]$ of length $n$ and
%oriented as $[x,y]$.
Let us fix a vertex $v\in \TT$.
We start by recalling the construction of a cocycle due to Monod and Shalom, \cite{Monod_Shalom},
which is a discrete version of the Gromov--Sela cocycle.
We define a map 
$\omega:\Gamma\to\ell^1(\En)$
by 
\begin{equation*}
\omega(g):=\chi_{\llbracket v,gv\rrbracket}-\chi_{\llbracket gv,v \rrbracket },
\end{equation*}
where $\llbracket x,y \rrbracket$  is the set of connected geodesic segments of length $n$ 
contained in the geodesic $[x,y]$ and oriented as $[x,y]$.
It is easy to see that while $\omega$ is not bounded as a function of $g$, 
\begin{equation*}
  \sup_{g,h\in\Gamma}\|\delta^1\omega(g,h)\|_1<\infty,
\end{equation*}
as $\delta^1\omega(g,h):=\pi(g)\omega(h)-\omega(gh)+\omega(g)$
is supported only on the geodesic paths that go through the median 
of the vertices $v,gv$ and $ghv$\footnote{This means that the cocycle 
$\delta^1\omega:\Gamma\times\Gamma\to\ell^1(\En)$ defines 
a bounded cohomology class $[\delta^1\omega]\in\hb^2(\Gamma,\ell^1(\En))$.}. 
Given an oriented $n$-geodesic $s$, we define the {\em median quasimorphism}  
$f_{\rho(\Gamma)s}:\Gamma\to\mathbb{Z}$ 
to be the evaluation of $\omega$ on the characteristic function $\chi_{\rho(\Gamma)s}\in\ell^\infty(\En)$
of the $\Gamma$ orbit of $s$
\begin{equation}\label{eq:fL}
    f_{\rho(\Gamma)s}(g)
:=\langle\omega(g),\chi_{\rho(\Gamma)s}\rangle
 =\sum_{\gamma\in \rho(\Gamma)s}\chi_{\llbracket v,gv\rrbracket}(\gamma)-\chi_{\llbracket gv,v \rrbracket }(\gamma).
\end{equation}
%\begin{equation}\label{eq:fL}
%\begin{array}{rccl}
%f_{\rho(\Gamma)s}:&\Gamma &\longrightarrow &\mathbb{Z}\\
%&g&\longmapsto & \sum_{\gamma\in \rho(\Gamma)s}\chi_{\llbracket v,gv\rrbracket}(\gamma)-
%\chi_{\llbracket gv,v \rrbracket }(\gamma),
%\end{array}
%\end{equation}

%Let now $L$ be a $\Gamma$-invariant subset of $\En$ and $v$ be a vertex of $\TT$, we define
%\begin{equation}\label{eq:fL}
%\begin{array}{rccl}
%f_{L}:&\Gamma &\longrightarrow &\mathbb{Z}\\
%&g&\longmapsto & \sum_{\gamma\in L}\omega(v,gv)(\gamma).
%\end{array}
%\end{equation}
%\begin{rem}\label{rem:L}
%If $L$ is a $\Gamma$-invariant set in $\En$ such that $L=L^{-1}$, then
%$f_L\equiv 0$.
%\end{rem}
%Let now $s$ be an oriented $n$-geodesic, then the orbit space $\rho(\Gamma)s$ is a $\Gamma$-invariant subset of $\En$. 
%The map $f_{\rho(\Gamma)s}$ is a  quasimorphism and $\delta^1f_{\rho(\Gamma)s}$
%represents a bounded $2$-cocycle where $\delta^1f_{\rho(\Gamma)s}(g,h)=f_{\rho(\Gamma)s}(g)+f_{\rho(\Gamma)s}(h)-
%f_{\rho(\Gamma)s}(gh)$ is the usual coboundary map for the inhomogeneous cocomplex.
Then, in Section~\ref{mainthm} we prove the following result:

\begin{thm_intro}\label{thm:classification}
 Suppose that $\Gamma$ acts minimally on the tree $\TT$ and that every vertex of $\TT$ has valence greater than 2. Then exactly one of the following holds.
 
 \begin{enumerate}[$(i)$]
\item There exists $l\in \{1,2\}$ so that for each $n$, $\Gamma$ acts transitively on
$$\{[x,y]\in\En: \ d(x,v) \equiv 0\, (l)\},$$
where $v$ is any vertex of $\TT$.
\item $\Gamma$ fixes a point $a$ in the boundary $\partial \TT$ of $\TT$.
\item There exists an infinite family $\{[\delta^1f_{\rho(\Gamma)s_n}]\}_{n\in\mathbb N}$  of linearly independent
coclasses in $\hb^2(\Gamma,\mathbb{R})$. In particular, there exists an injective $\mathbb{R}$-linear map $\ell^1\rightarrow \hb^2(\Gamma,\mathbb{R})$.
 \end{enumerate}
\end{thm_intro}

\begin{rem_intro}\label{val2}
 The conditions that $\Gamma$ acts minimally on the tree $\TT$ and that every vertex of $\TT$ has valence greater than 2 are not restrictive because given any action on a tree $\TT$ one can consider a minimal subtree $\TT'$, which exists provided that $\Gamma$ does not consist entirely of parabolic elements fixing the same point at infinity \cite[Corollary 3.5]{Tits}. There are no vertices of valence 1 in $\TT'$ unless $\TT'$ consists of a single edge only, and vertices of valence 2 can be removed by regarding each maximal path containing only vertices of valence 2 in their interior as a single edge. The latter procedure gives a well-defined tree provided that $\TT'$ is not a subdivision of $\mathbb R$.
 \end{rem_intro}

%\cpcomm{Bounded cohomology cases (i) and (ii)...}

If $\TT$ is a locally finite tree, recall that a group $H<\aut(\TT)$ is \emph{locally $\infty$-transitive} if the stabiliser of each vertex is transitive on the $n$-spheres for all $n$.  
Local $\infty$-transitivity implies already very strong properties on the tree, 
that will be either regular or bipartite.  
Moreover, in some sense locally $\infty$-transitive groups are the most rigid.  
For example if $H<\aut(\TT)$ is a closed subgroup that is locally $\infty$-transitive, then for all $N\trianglelefteq H$, $N\neq\{e\}$,
the quotient $H/N$ is compact, \cite[Proposition~3.1.2]{Burger_Mozes_gat}.  
Or else, if $\Gamma<H_1\times H_2$ is a cocompact lattice with $H_i<\aut(\TT_i)$
and $H_i$ is locally $\infty$-transitive, then $\Gamma$ satisfies the Normal Subgroup Theorem, 
that is every normal subgroup is finite or of finite index, \cite{Bader_Shalom}.  

We prove in Section \ref{sec:transitivity}:

\begin{cor_intro}\label{cor:locally_inf_trans} Let $\TT_i$, $i=1,2$ be regular trees of valence greater than $2$ and 
let $\Gamma<\aut(\TT_1)\times\aut(\TT_2)$ be a cocompact lattice. Also, let $\pr_i:\aut(\TT_1)\times \aut(\TT_2)\rightarrow \aut(\TT_i)$ be the projection maps.  Then the following are equivalent:
\begin{enumerate}
\item the $H_i:=\overline{\pr_i(\Gamma)}^Z$ are locally $\infty$-transitive, $i=1,2$;
\item any quasimorphism $\Gamma\to\mathbb{R}$ is bounded;
\item any median quasimorphism $\Gamma\to\mathbb{R}$ is bounded.
\end{enumerate}
\end{cor_intro}

As a consequence of this corollary we can construct infinite families of examples of groups for which
Theorem~\ref{thm:classification}-(iii) holds, as appropriate extensions of irreducible cocompact lattices 
in the product of $\PSL(n,\mathbb{Q}_p)\times\PSL(n,\mathbb{Q}_\ell)$, 
with $n\geq3$, $p,\ell$ prime (see Section~\ref{examples}).

\medskip
Theorem~\ref{thm:classification} applies in particular to amalgamated products and HNN extensions. Fujiwara~\cite{Fujiwara} showed that, 
under mild hypotheses, these have infinite dimensional second bounded cohomology. 
We show that Fujiwara's result can be deduced from ours, that gives in fact a strictly 
larger class of amalgamated products with infinite dimensional second bounded cohomology. 
For instance the group $S_3\ast_{\mathbb{Z}_2}\mathbb{Z}_4$, 
where $S_3$ is the symmetric group on $3$ elements, satisfies condition (iii) of Theorem~\ref{thm:classification} 
but is not covered by Fujiwara's result, \cite[Theorem~1.1]{Fujiwara}.
See Section~\ref{examples} for details.

Another very large class of groups known to have infinite dimensional second bounded cohomology 
is that of \emph{acylindrically hyperbolic groups} \cite{HullOsin} (see also \cite{FriPoSi,BBF2}), 
and under rather general conditions a group acting on a tree is acylindrically hyperbolic \cite[Theorem 2.1]{MO}. 
However, we remark that groups satisfying condition (iii) of Theorem~\ref{thm:classification} include
examples that are not covered by \cite{MO} (once again, see Section~\ref{examples} for details).

\section{Notation}\label{sec:notation}
We fix the notation of the theorem, until the end of Section \ref{remarks}.

We fix a vertex $v$ of $\TT$ as basepoint. All geodesics have vertices of $\TT$ as endpoints.

\begin{defi}
The \emph{orbit length} (or \emph{o-length}) of a geodesic $\gamma$  is the number of vertices
in $\rho(\Gamma) v$ crossed by $\gamma$ minus one. We denote the o-length by $|\cdot|_o$.  
\end{defi}

An $o(n)$-geodesic is a geodesic segment of orbit length $n$ with vertices in $\rho(\Gamma) v$ as endpoints. If we do not need to emphasize the length we will use the notation o-geodesic.
We call \emph{o-edge} an $o(1)$-geodesic and \emph{o-vertex} a vertex in the orbit $\rho(\Gamma) v$.
The set of oriented $o(n)$-geodesics is denoted by $\Evn$.

\begin{defi}
Let $\gamma_1$ and $\gamma_2$ be two o-geodesics. We say that $\gamma_1$ is \emph{chainable} 
with $\gamma_2$ if $\xi \gamma_1$ concatenates with $g \gamma_2$ in an o-vertex for some 
$g, \xi \in \Gamma$ and they
only intersect in such o-vertex.
\end{defi}

\section{Proof of Theorem~\ref{thm:classification}}\label{mainthm}
%We recall that we have fixed notations in Section~\ref{sec:notation}.

\subsection{Mutual exclusiveness}
First of all, we observe that cases (i), (ii) and (iii) are mutually exclusive. It is clear that (i) and (ii) cannot simultaneously occur.

Suppose now that (i) holds, and let us show that for any oriented geodesic $s$, $f_{\rho(\Gamma)s}$ is identically $0$. In fact, the number of translates of $s$ contained in an o-geodesic $\gamma$ only depends on the o-length of $\gamma$, and hence it coincides with the corresponding number for $\gamma^{-1}$. By definition of $f_{\rho(\Gamma)s}$, this implies that $f_{\rho(\Gamma)s}$ is identically $0$.

If (ii) holds, any oriented geodesic is obtained concatenating two (possibly trivial) subgeodesics $\gamma_1,\gamma_2$ so that $\gamma_1$ and $\gamma_2^{-1}$ are each contained in some ray towards the fixed point $a\in\partial\TT$. Furthermore, this decomposition is preserved by the action of $\Gamma$. If the decomposition for the oriented geodesic $s$ is trivial, meaning that one of the two subgeodesics is trivial, then $f_{\rho(\Gamma)s}$ is $0$ by an argument similar to the one we gave above. If the decomposition is non-trivial, any oriented geodesic contains at most one translate of $s$, whence $f_{\rho(\Gamma)s}$ is bounded.

\subsection{The strategy}
We now have to show that when (i), (ii) do not hold, then (iii) must hold.

Here is an informal description of the strategy that we will follow. We will define an equivariant labelling on the oriented o-geodesics of $\TT$, the label of an oriented o-geodesic being a word in the alphabet $\{a,b,c\}$. We will show that certain ``sufficiently complicated'' words can be realised as labels of oriented o-geodesics (when (i) and (ii) do not hold). The main property of such words is that they only have short common subwords compared to their length, which translates into o-geodesics realizing such words having short overlaps compared to their o-length. This property will allow us to control the counting procedure that defines $f_{\rho(\Gamma)s}$ and in particular to show, for suitable choices of $s$, that the homogenization of $f_{\rho(\Gamma)s}$ is not a homomorphism.

\subsection{Labellings}\label{sub:labellings}
A \emph{labelling} for us will consist of the following data. First, a positive integer $k$. Second, the choice, for each orbit of o(k)-geodesics, of a letter from $\{a,b,c\}$, its label. With a slight abuse, we sometimes refer to the label of an o(k)-geodesic to mean the label of its orbit. Finally, the final piece of data is the assignment of a word to each o-geodesic according to the following procedure. Let $\gamma$ be an o-geodesic. Let $\gamma_1$ be the 
initial oriented subpath of $\gamma$ of orbit length $k$, we define the first letter of the label of $\gamma$ to be the label of $\gamma_1$.
More, in general, let $\gamma_i$
be the oriented subpath of $\gamma$ of orbit length $k$ that
starts from the $i$-th o-vertex of $\gamma$. Then the $i$-th letter of the label of $\gamma$ is the label of $\gamma_i$. 
Once again, we call the word associated to an o-geodesic the label of the o-geodesic.
Notice that if an o-geodesic has orbit length $L\geq k$ then the associated word has length $L-k+1$.
(The label of o-geodesics of orbit length less than $k$ is empty.)

\subsection{Choice of words}
We wish to realise certain words as labels of o-geodesics. The words satisfy the following requirements.

If $w$ is a word in the alphabet $\{a,b\}$ we denote by $\overline{w}$ the word obtained reading $w$ from right to left.
%and replacing each $a$ with a $b$ and vice versa.

\begin{lemma}\label{lem:triple}
There exists a family of words $\{w_{ni}\}_{n\geq 1, i=1,2,3}$ in the alphabet $\{a,b\}$ such that
%There exists $n_0>0$ such that for every $n>max\{n_0,4k\}$
%there exist three words $w_{n1},w_{n2},w_{n3}$ in the alphabet $\{a,b\}$ such that
\begin{enumerate}[(i)]
%\item the  length of $w_{ni}$ is $\lceil n/2 \rceil $;
%\item the three words does not have common subwords of length bigger or equal $\lfloor \lceil n/2\rceil/2\rfloor$;
\item each $w_{ni}$ is the concatenation of words of the form $ab^N$ for $N\geq 2$;
\item for each integer $k$ there exists $n_0(k)$ so that if $n,m\geq n_0(k)$ and if $w_{ni}$ and $w_{mj}$ share a subword of length at least $\min\{|w_{ni}|+k-1,|w_{mj}|+k-1\}/10-k$ then $n=m$ and $i=j$; 
%\item no $w_{nj}$ for $j\in\{1,2,3\}$ contains $a^2$ as subword;
\item Once again for $n,m\geq n_0(k)$, $w_{ni}$ does not share a common subword of length at least $\min\{|w_{ni}|+k-1,|\overline{w_{mj}}|+k-1\}/10-k$ with $\overline{w_{mj}}$;
%\item for each $j\in\{1,2,3\}$, the word $w_{nj}$ has $b^3$ as subword \cpcomm{(verify if we need!)};
%\item each word ends with a $b$ and, up to reordering the indices, $w_{n3}$ starts also with a $b$; 
\end{enumerate}
\end{lemma}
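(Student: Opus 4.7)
The plan is to build each $w_{ni}$ as a concatenation of blocks $ab^N$ (with $N\geq 2$) whose block-lengths uniquely encode $(n,i)$. I would fix pairwise disjoint infinite sets $\{P_{ni}\}_{n\geq 1,\,i\in\{1,2,3\}}$ partitioning $\{N\in\mathbb{N}:N\geq 2\}$ (any bijection $\mathbb{N}_{\geq 2}\leftrightarrow(\mathbb{N}\times\{1,2,3\})\times\mathbb{N}$ furnishes one), enumerate each $P_{ni}$ in strictly increasing order as $N_1^{(ni)}<N_2^{(ni)}<\cdots$, choose a sequence $L_n\to\infty$ so that $|w_{ni}|$ grows asymptotically faster than $N_{L_n}^{(ni)}$, and set
\[
  w_{ni}\;:=\;ab^{N_1^{(ni)}}\,ab^{N_2^{(ni)}}\cdots ab^{N_{L_n}^{(ni)}}.
\]
Property (i) holds by construction.

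The crucial combinatorial observation is that, for any subword $u$ of $w_{ni}$, the maximal $b$-runs sitting strictly between two $a$-letters of $u$ (the \emph{interior} $b$-runs of $u$) form a contiguous subsequence of $(N_1^{(ni)},\ldots,N_{L_n-1}^{(ni)})$; in particular their lengths lie in $P_{ni}$ and appear in strictly increasing order. Symmetrically, the interior $b$-runs of any subword of $\overline{w_{mj}}$ correspond to a contiguous subsequence of $(N_{L_m-1}^{(mj)},\ldots,N_1^{(mj)})$, so their lengths lie in $P_{mj}$ and appear in strictly \emph{decreasing} order.

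For (ii), let $u$ be a common subword of $w_{ni}$ and $w_{mj}$ with $(n,i)\neq(m,j)$. Every interior $b$-run length of $u$ lies in $P_{ni}\cap P_{mj}=\emptyset$, so $u$ has no interior $b$-runs; consequently $u$ contains at most one $a$, and its at most two boundary $b$-runs each lie inside a single block of $w_{ni}$ (and of $w_{mj}$). This forces $|u|\leq 1+2\min\{N_{L_n}^{(ni)},N_{L_m}^{(mj)}\}$, which by the growth choice falls below $\min\{|w_{ni}|+k-1,|w_{mj}|+k-1\}/10-k$ once $n,m\geq n_0(k)$ for a suitable $n_0(k)$.

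For (iii), when $(n,i)\neq(m,j)$ the same disjointness argument applies, because $\overline{w_{mj}}$ still has all its interior $b$-runs in $P_{mj}$. In the remaining case $(n,i)=(m,j)$, the interior $b$-runs of $u$ must simultaneously form a strictly increasing subsequence of $(N_k^{(ni)})_k$ (viewed in $w_{ni}$) and a strictly decreasing one (viewed in $\overline{w_{ni}}$), so there is at most one, giving the same bound on $|u|$. The main obstacle --- really the only design choice with any teeth --- is calibrating the partition $\{P_{ni}\}$ and the growth rate of $L_n$ to ensure $N_{L_n}^{(ni)}$ is much smaller than $|w_{ni}|/20$ uniformly in $(n,i)$; a Cantor-style partition giving $N_k^{(ni)}=O(k^2)$, for example, yields $|w_{ni}|=\Theta(L_n^3)$ while $N_{L_n}^{(ni)}=O(L_n^2)$, comfortably enough.
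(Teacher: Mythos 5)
Your construction is correct and rests on exactly the same idea as the paper's: make the $b$-run lengths (essentially) disjoint across distinct pairs $(n,i)$ and strictly increasing within each word, so that a long common subword would have to contain a full interior $b$-run, which is impossible (or, for the reversal case, forces increasing $=$ decreasing). The paper simply instantiates this with the explicit choice $w_{ni}=ab^{v}ab^{v+1}\cdots ab^{v+100}$ for $v=100(3n+i)$, whereas you use a general partition of $\{N\geq 2\}$ with a growth calibration; the substance is the same.
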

\begin{proof}
Let us pick $v=100(3n+i)$ with $n\geq 1$ and $i\in \{1,2,3\}$. 
We choose $w_{ni}=ab^vab^{v+1}\cdots ab^{v+100}$.
Since all the exponents of the b's are different the family of words $\{w_{ni}\}$ easily satisfies the lemma. 
\end{proof}
We say that a word in $\{a,b\}$ is \emph{good} if it satisfies the first item. In the next subsection we will exhibit, in all cases where (i) and (ii) of Theorem~\ref{thm:classification} do not apply, a labelling so that any good word can be realised as the label of some o-geodesic. The reader might wish to skip this on first reading.

\begin{rem}\label{rem:inverses}
 We will also ensure that the labellings satisfy one of the following two conditions.
 \begin{enumerate}
  \item Either we assign the label $a$ (resp.~$b$) to a unique orbit of o-geodesics, or
  \item the inverse of any element labelled $a$ is not in an orbit labelled $b$.
 \end{enumerate}
\end{rem}

\subsection{Definition of the labelling}
The labellings have to be defined in different ways depending on the properties of the action of $\Gamma$. We also show case by case why any good word can be realised as an oriented o-geodesic.
%Notice that we are not  supposing any longer that the action is transitive on the vertices.

\par\medskip

\emph{Case (1). Suppose that $\Gamma$ does not act transitively on the set $\Evone$ of o-edges, and furthermore that there exist representatives $e_1,e_2, e_3$ of three distinct orbits of o-edges that only meet at their common starting point $v$.}

\par\medskip

 %Up to permuting the indices, we assume that the inverse of $e_1$ is not in the orbit of $e_2$ or $e_3$.
The first required piece of data of the labelling is a positive integer: we pick $k=1$.

Let us point out the following fact that will be used in this subsection.
\begin{rem}\label{rem:chainable}
Every incoming o-edge $e$ in $v$ is chainable with at least two o-edges among $e_1$, $e_2$ and $e_3$. Indeed, the o-edge $e$ (the thickened one in Figure~\ref{chainable}) could share its final edge with at most one among  $e_1$, $e_2$ and $e_3$. 
\begin{figure}[!htbb]
\centering
\psfrag{a}{$e_1$}
\psfrag{c}{$e_2$}
\psfrag{b}{$e_3$}
\psfrag{e}{$e$}
\includegraphics[width=0.15\textwidth]{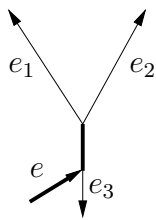}
\hspace{5mm}
\caption{$e$ is chainable with $e_1$ and $e_2$}
\label{chainable}
\end{figure}

Similarly, if we suppose that two o-edges only meet at their common starting point $v$, then every incoming o-edge $e$ in $v$ is chainable with at least one of those (see Figure~\ref{chainable2}).
\begin{figure}[!htbb]
\centering
\psfrag{b}{$e_1$}
\psfrag{c}{$e_2$}
%\psfrag{b}{$e_3$}
\psfrag{e}{$e$}
\includegraphics[width=0.15\textwidth]{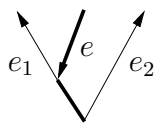}
\hspace{5mm}
\caption{$e$ is chainable with $e_2$}
\label{chainable2}
\end{figure}
In particular, if both the emanating o-edges are in the orbit of $e_1$, for instance, then every incoming o-edge in $v$ is chainable with $e_1$.
\end{rem}   

Let us now form a directed graph $\Lambda$ (possibly with loops) with vertices marked $1,2,3$ and edges as follows. If, for some $i,j$, $e_i$ is chainable with $e_j$, then connect $i$ and $j$ with a directed edge $i\rightarrow j$ in the graph $\Lambda$. By Remark~\ref{rem:chainable} it follows that at least two edges emanate from any vertex of $\Lambda$.

\begin{lemma}
 Up to permuting the indices, either $\Lambda$
 \begin{enumerate}[(a)]
  \item contains the subgraph described in Figure \ref{fig:cycle}-(a), or
  \item it is the graph of Figure \ref{fig:cycle}-(b), or
  \item it is the graph of Figure \ref{fig:cycle}-(c).
 \end{enumerate}
 \begin{figure}[!htbb]
\centering
\psfrag{1}{$1$}
\psfrag{2}{$2$}
\psfrag{3}{$3$}
\subfigure[]%
{\includegraphics[width=0.25\textwidth]{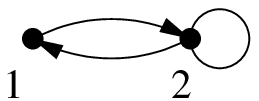}}
\hspace{5mm}
\subfigure[]%
{\includegraphics[width=0.2\textwidth]{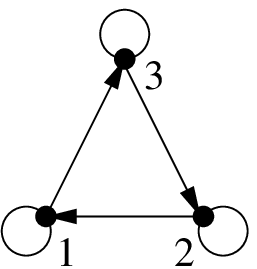}}
\hspace{5mm}
\subfigure[]
{\includegraphics[width=0.2\textwidth]{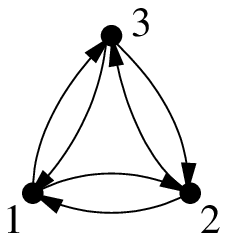}}
\hspace{5mm}
\caption{Directed graph $\Lambda$}
\label{fig:cycle}
\end{figure}
\end{lemma}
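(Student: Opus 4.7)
The plan is to derive the trichotomy by a case analysis on whether $\Lambda$ contains a bigon, using the out-degree lower bound announced just before the lemma.

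To establish that bound, given $e_i$ with endpoint $v_i$, I would pick $g \in \Gamma$ with $gv = v_i$ and apply Remark~\ref{rem:chainable} at $v$ to the pullback $g^{-1}e_i$, which is an incoming o-edge at $v$: it is chainable with at least two among $e_1, e_2, e_3$. Since chainability depends only on orbits, $e_i$ itself is then chainable with at least two of $e_1, e_2, e_3$, so every vertex $i$ of $\Lambda$ has out-degree at least $2$. With this in hand, if some pair $i \neq j$ admits both $i \to j$ and $j \to i$, I would permute the indices to move the bigon onto $\{1,2\}$, so that $\Lambda$ contains the subgraph of Figure~\ref{fig:cycle}-(a) and we are in case~(a).

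Otherwise $\Lambda$ has no bigon. I would first rule out that any vertex has two distinct non-loop out-edges: if $N^+(1) \supseteq \{2,3\}$, the no-bigon condition forces $N^+(2), N^+(3) \subseteq \{2,3\}$, and the out-degree bound then requires loops at $2$ and $3$ together with both $2 \to 3$ and $3 \to 2$, contradicting no-bigon. Hence every vertex carries a loop and a unique non-loop out-edge $i \to f(i)$, giving a fixed-point-free map $f\colon \{1,2,3\} \to \{1,2,3\}$ with $f(f(i)) \neq i$; such an $f$ is necessarily a 3-cycle, so up to relabelling $f$ is $1 \to 2 \to 3 \to 1$ and $\Lambda$ matches Figure~\ref{fig:cycle}-(b), or reduces to Figure~\ref{fig:cycle}-(c) once the loops are disregarded. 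The main obstacle is precisely this no-bigon analysis: one has to carefully track how the no-bigon constraint propagates from a presumed vertex with two non-loop out-edges to the remaining two vertices, secretly producing a bigon there and collapsing the mixed configurations that at first sight would seem admissible.
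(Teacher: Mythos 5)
Your out-degree bound is established exactly as in the paper (normalise the terminal vertex of $e_i$ back to $v$ and apply Remark~\ref{rem:chainable}), and your ``no bigon'' branch correctly forces a loop plus a unique non-loop out-edge at every vertex, with the non-loop edges forming a $3$-cycle; that is precisely Figure~\ref{fig:cycle}-(b). The gap is in your first branch. Figure~\ref{fig:cycle}-(a) is not a bare bigon: the paper's proof calls it a \emph{bigon-loop}, and its use in Case~(1a) (where the syllable $bb$ comes from $e_2$ being chainable with $e_2$) shows it consists of a bigon $1\leftrightarrow 2$ \emph{together with a loop} at one of its endpoints. So ``$\Lambda$ contains a bigon'' does not imply ``$\Lambda$ contains the subgraph of Figure~\ref{fig:cycle}-(a)''. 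The configuration you are missing is exactly Figure~\ref{fig:cycle}-(c): if $\Lambda$ has no loops at all, the out-degree bound forces every vertex to point to both other vertices, i.e.\ $\Lambda$ is the complete loopless digraph on $\{1,2,3\}$; this graph is full of bigons but contains no bigon-loop, so your argument misfiles it under case~(a), while your ``no bigon'' branch can never produce it. Relatedly, the clause ``or reduces to Figure~\ref{fig:cycle}-(c) once the loops are disregarded'' is not a derivation of anything: the all-loops-plus-$3$-cycle configuration \emph{is} Figure~\ref{fig:cycle}-(b), and erasing its loops yields a graph violating your own out-degree bound, not Figure~\ref{fig:cycle}-(c).

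The repair is small but necessary: split your bigon case according to whether $\Lambda$ carries a loop. If $\Lambda$ has a bigon and at least one loop somewhere, a short argument using the out-degree bound shows a bigon-loop must occur (if the loop is not already at an endpoint of the bigon, chase the second out-edges of the bigon's endpoints and of the looped vertex), giving case~(a). If $\Lambda$ has a bigon and no loops, you get the complete loopless digraph, i.e.\ case~(c). If $\Lambda$ has no bigon, your argument gives case~(b). Note that the paper organises the case analysis differently -- it conditions first on the absence of a bigon-loop and then on whether a chosen vertex carries a loop, deducing that loops propagate to all vertices or to none -- but the corrected version of your dichotomy would be an equally valid route.
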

\begin{proof}
Let us suppose that $\Lambda$ does not contain a subgraph of type bigon-loop  (Figure \ref{fig:cycle}-(a)).
Let us focus on the vertex $j\in\{1,2,3\}$. Either one edge emanating from $j$ is a loop
or no edge is a loop.
If  a loop is based at $j$ then the same happens at all vertices of $\Lambda$ otherwise 
we can realise a bigon-loop. Hence,
we have the configuration described in Figure \ref{fig:cycle}-(b).
%Indeed, if there is an edge from $j$ to $i$  then a loop and an edge ending in the third vertex $k$ emanate from $i$,
%otherwise we realize a bigon-loop with vertices $j$ and $i$ (Figure \ref{fig:cycle}.(a)). The same happens on the vertex $k$.
On the other hand, if no edge emanating from $j$ is a loop then there are no loops in $\Lambda$, and the only possible configuration is the one described in Figure \ref{fig:cycle}-(c).
\end{proof}

\emph{Case (1a).} Suppose that the graph $\Lambda$ contains a bigon-loop as in Figure
\ref{fig:cycle}-(a).
\par\medskip
Assign label $a$ to the orbit of the o-edge $e_1$ and label $b$ to the orbit of the o-edge $e_2$. Finally, assign label $c$ to all other orbits.
%orbits of o-edges that are neither  the orbit of $e_1$ nor the orbit of $e_2$.

Figure \ref{fig:cycle}-(a) implies that $e_2$ is chainable with $e_1$ and this realises the syllable $ba$. Similarly, $e_1$ is chainable with $e_2$ so that we have the syllable $ab$. Finally, $e_2$ is chainable with $e_2$, which corresponds to $bb$. 
Therefore all good words can be realised as labels of o-geodesics.

\par\medskip
\emph{Case (1b).} The graph $\Lambda$ is of type described in Figure \ref{fig:cycle}-(b).
\par\medskip
Assign label $b$ to the orbit either of the o-edge $e_2$ or of the o-edge $e_3$. Also, assign label $a$ to the orbit of the o-edge $e_1$ and label $c$ to all other orbits. % of o-edges $e_1$, $e_2$ and $e_3$.

Similarly to Case (1a), Figure \ref{fig:cycle}-(b) implies that  all good words can be realised as labels of o-geodesics.
%$e_2$ is chainable with $e_1$ and this realises the syllable $ba$. Similarly, $e_1$ is chainable with $e_3$ so that we have the syllable $ab$. Finally, $e_3$ is chainable with $e_3$, which corresponds to $bb$. % and $e_1$ is chainable with $e_1$ which realises $aa$.

Recall that we need to verify that our labelling satisfies one of the two conditions listed in Remark~\ref{rem:inverses}. Since condition (1) does not hold, let us verify that the inverse of any element labelled $a$ is not in an orbit labelled $b$ (condition (2)).
First of all notice that the only possible configuration of the o-edges associated to Figure~\ref{fig:cycle}-(b)
is described by Figure~\ref{case1b}. 
\begin{figure}[!htbb]
\centering
\psfrag{a}{$e_1$}
\psfrag{c}{$e_3$}
\psfrag{b}{$e_2$}
%\psfrag{e}{$e$}
\includegraphics[width=0.4\textwidth]{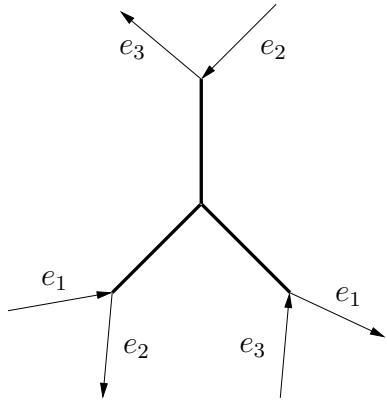}
\hspace{5mm}
\caption{configuration of o-edges associated to Figure~\ref{fig:cycle}-(b)}
\label{case1b}
\end{figure}
Therefore it can be easily seen that 
the inverse of $e_1$ is neither in the orbit of $e_2$ nor in the orbit of $e_3$, for otherwise in both cases  $e_2$ would be chainable with $e_3$. 
%Finally, the inverse of $e_1$ is not in the same orbit of $e_3$, for otherwise $e_2$ would be chainable with $e_3$.
%By the symmetry of the configuration it turns out that  for $i\in \{1,2,3\}$ the inverse of $e_i$ is not in the orbit of $e_1$, $e_2$ or $e_3$ and so it is labelled with a $c$.

\par\medskip
\emph{Case (1c).} The graph $\Lambda$ is of type described in Figure \ref{fig:cycle}-(c).
\par\medskip
Assign label $a$ to  the orbit of the o-edge $e_1$. Also, assign label $b$ to the orbits of the o-edges $e_2$ and  $e_3$, and label $c$ to all other  orbits. % of o-edges $e_1$, $e_2$ and $e_3$. 
Then similarly to Case (1a) we can realise the syllables $ab, ba, bb$ concatenating elements in the orbits of $e_1$, $e_2$, $e_3$. 

Moreover, as in Case (1b), let us verify that condition (2) of Remark~\ref{rem:inverses} holds.
The inverse of $e_1$ is neither in the orbit of $e_2$ nor in the orbit of $e_3$,
for otherwise $e_2$ (resp.~$e_3$) would be chainable with $e_2$ (resp.~$e_3$). Therefore the orbit of $e_1^{-1}$ is labelled with an $a$ or with a $c$. 
%Similarly the inverse of $e_2$ (resp.~$e_3$)
%is not in the same orbits of $e_1$ or $e_3$ (resp.~$e_2$) and so it is labelled with a $b$ or with a $c$.

%%%%%%%using symmetry of $\Lambda$

This completes the definition of the labelling in Case (1). 

% There is a natural way to assign a word in the alphabet
% $\{a,b,c\}$ to an oriented geodesic in $\TT$.
% On the other hand, a word in $\{a,b,c\}$ is not necessarily realised as the label of
% an oriented geodesic in $\TT$. Notice however that all the syllable ab, ba, bb are realised, therefore
% we can represent all the words in $\{a,b\}$ but the one containing $aa$ as subword.
% Indeed, for any given label and any given vertex $v$ there is at least one edge with the given label and starting point $v$, due to vertex-transitivity, and actually at least 2 edges with label $b$ due to our choices. 

\par\medskip

\emph{Case (2). Suppose that $\Gamma$ does not act transitively on the set $\Evone$ of o-edges, and that there do not exist o-edges $e_1,e_2,e_3$ as in Case (1).}
\par\medskip

As in Case (1) we choose as positive integer $k=1$.

 Since the valence of $v$ is at least $3$, there exist o-edges $e_1$,
$e'_1=g(e_1)$ with $g\in \Gamma$ and $e_2\notin \rho(\Gamma)e_1$ emanating from $v$ and pairwise only intersecting in $v$, see Figure \ref{fig:triple}.
\begin{figure}[!htbb]
\centering
\psfrag{a}{$e_1$}
\psfrag{c}{$e_1'$}
\psfrag{b}{$e_2$}
\includegraphics[width=0.15\textwidth]{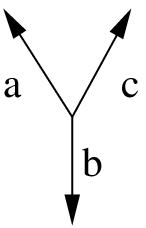}
\hspace{5mm}
\caption{Case (2)}
\label{fig:triple}
\end{figure}

\emph{Case (2a).} Suppose that $e_1$ is chainable with $e_2$.
\par\medskip
As emphasized in Remark~\ref{rem:chainable}, any o-edge is chainable with $e_1$. In this case assign label $a$ to the orbit of the o-edge $e_2$ and label $b$ to the orbit of the o-edge $e_1$. Moreover, assign  label $c$ to all other orbits. 
Then the above chainable pairs allow us to realise the syllables $ab$, $ba$, $bb$.

\par\medskip

\emph{Case (2b).} Suppose that $e_1$ is not chainable with $e_2$, but that $e_1$ is chainable with $e_1^{-1}$.
\par\medskip
This case cannot occur. In fact, by hypothesis, there are two o-edges in the orbit of $e_1$ only intersecting in their common final point $v$. This easily implies that $e_1$ is chainable with any o-edge.  %(see Figure~\ref{case2b}).
% \begin{figure}[!htbb]
% \centering
% %\psfrag{a}{$e_1$}
% \psfrag{c}{$e_1$}
% \psfrag{a}{$e_1$}
% \psfrag{e}{$e'$}
% \psfrag{b}{$e$}
% \includegraphics[width=0.15\textwidth]{case2b1.eps}
% \hspace{5mm}
% \caption{Case (2b)}
% \label{case2b}
% \end{figure}
 
\par\medskip
\emph{Case (2c).} Suppose that $e_1$ is not chainable with $e_2$, and $e_1$ is not chainable with $e_1^{-1}$. We show that condition (ii) of Theorem~\ref{thm:classification} applies.

We prove that one can put an orientation on $\TT$ with the following properties:
\begin{itemize}
 \item the (oriented) edge $e$ is oriented positively (resp. negatively) if and only if it is contained in an o-edge in the orbit of $e_1$ (resp. $e_1^{-1}$).
 \item Every o-vertex has exactly one incoming edge.
\end{itemize}

Let us start with the remark that $e_1$ is not in the orbit of $e_1^{-1}$, for otherwise $e_1$ would be chainable with $e_2$.

Also, we claim that any o-edge $e$ emanating from $v$ and sharing the first edge with some o-edge in the same orbit as $e_1$ has to be in the same orbit as $e_1$ (Figure~\ref{fig:e-1}). 
\begin{figure}[!htbb]
\centering
\psfrag{a}{$e_1$}
\psfrag{b}{$e_2$}
\psfrag{c}{$e'_1$}
%\psfrag{d}{$e'$}
\psfrag{e}{$e$}
\includegraphics[scale=1.2]{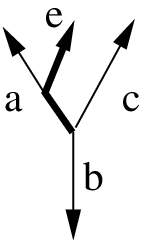}
\caption{}
\label{fig:e-1}
\end{figure}
In fact, if this was not the case we would have o-edges in the orbits of $e_1,e_2,e$ sharing only their common starting point. Hence, this would be a triple as in Case (1) because the orbits of $e_1,e_2,e$ would be pairwise distinct: $e_1$ is not in the same orbit as $e_2$ by hypothesis, and if $e$ was in the same orbit as $e_2$ there would be two o-edges emanating from $v$ in the orbit of $e_2$ that only intersect at $v$, and this would imply that $e_1$ is chainable with $e_2$ (see Remark~\ref{rem:chainable}).

Finally, all o-edges in the orbit of $e_1^{-1}$ emanating from $v$ must share the first edge, for otherwise $e_1$ would be chainable with $e_1^{-1}$ (see Remark~\ref{rem:chainable}).

The three observations above imply that all o-edges in the orbit of $e_1^{-1}$ emanating from $v$ share the first edge with $e_2$, for otherwise $e_1$ would be chainable with $e_2$. 

Also, any other edge emanating from $v$ is contained only in o-edges in the orbit of $e_1$, for otherwise we would have a triple as in Case (1). 
Notice that (once we show that the orientation is well-defined) this gives us the second property above.

Let us now show that any edge $e$ is contained in some o-edge in the orbit of either $e_1$ or $e_1^{-1}$.

By minimality of the action, $e$ is contained in some o-edge $e'$. If $e'$ is in the orbit of either $e_1$ or $e_1^{-1}$ we are done, so suppose not. By what we have shown so far, $e'$ shares the initial edge with an o-edge in the orbit of $e_1^{-1}$. It is readily seen that $e$ is either contained in an o-edge in the orbit of $e_1^{-1}$ or in an o-edge $e''$ (the thickened o-edge in Figure \ref{fig:e'}) that lies in the orbit of $e_1$ since it shares the initial edge with an o-edge in the orbit of $e_1$.

\begin{figure}[!htbb]
\centering
%\psfrag{a}{$e_1$}
\psfrag{b}{$e_1$}
%\psfrag{c}{$e_1$}
\psfrag{d}{$e'$}
\psfrag{e}{$e''$}
\includegraphics[scale=0.8]{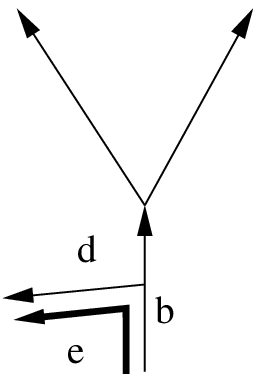}
\caption{}
\label{fig:e'}
\end{figure}

Finally, we are left to show that the orientation is well-defined, namely that any given edge cannot be contained both in an o-edge in the orbit of $e_1$ and an o-edge in the orbit of $e_1^{-1}$. 
If not,  we could construct an o-edge $e'$ (the thickened o-edge in Figure \ref{fig:e_1}) sharing the first edge with $e_1$ and the final edge with an o-edge in the orbit of $e_1^{-1}$. The inverse of such o-edge would be in the orbit of $e_1^{-1}$ but would share the first edge with some o-edge in the orbit of $e_1$, a contradiction.

\begin{figure}[!htbb]
\centering
\psfrag{a}{$e_1$}
\psfrag{b}{$e_1^{-1}$}
\psfrag{c}{$e'$}
%\psfrag{d}{$e'$}
\psfrag{e}{$e'$}
\includegraphics[scale=1.2]{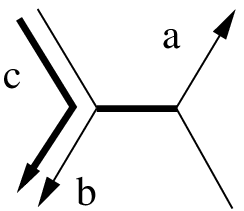}
\caption{}
\label{fig:e_1}
\end{figure}

The orientation we just constructed is clearly $\Gamma$-invariant, and there is only one point at infinity $a$ corresponding to geodesic rays obtained concatenating negatively oriented edges. Such point is a fixed point of $\Gamma$.

\par\medskip
\emph{Case (3a). There exists a minimal integer $k$ so that $\Gamma$ acts transitively
on $\Evone,\dots,\Evkone$ but not on $\Evk$, and also $k>1$.}
\par\medskip

As positive integer we pick $k$.

By hypothesis there exist at least two orbits of 
oriented geodesics of orbit length $k$ in $\TT$.
We pick one orbit of oriented geodesics, and we label  as $a$ 
that orbit, and we label as $b$ some other orbit of oriented geodesics.
Finally, we assign label $c$ to all other orbits.

Let us now realise every word $w$ in $\{a,b,c\}$
as (the label of) an oriented o-geodesic in $\TT$. 
Consider any $o(k)$-geodesic $\xi$
starting from the fixed vertex $v$ with label the first letter of $w$.
(The geodesic segment 
$\xi$ exists because $\Gamma$ is transitive on $\rho(\Gamma) v$.)
Let $\xi_{k-1}$ be the final subpath of $\xi$ of orbit length $k-1$, and
let $\theta$ be an oriented $o(k)$-geodesic containing $\xi_{k-1}$ and labelled as the second letter of $w$ (here we use the transitivity of $\Gamma$ on $o(k-1)$-geodesics). 
We can now concatenate $\xi$ with $\theta_1$, where
$\theta_1$ is the final subpath of $\theta$ of orbit length $1$
(see Figure~\ref{fig:geodesic}).
\begin{figure}[!htbb]
\centering
\psfrag{c}{$\xi$}
\psfrag{d}{$\theta$}
\psfrag{e}{$v$}
\psfrag{f}{$\theta_1$}
\psfrag{g}{$\xi_{k-1}$}
\includegraphics[scale=0.6]{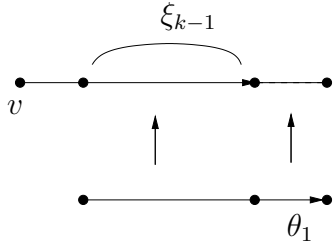}
\caption{Realise a word as an oriented o-geodesic}
\label{fig:geodesic}
\end{figure}
Iterating this construction, we can construct an oriented o-geodesic with any required label.

\par\medskip
\emph{Case (3b). $\Gamma$ acts transitively on $\Evk$ for each $k$.}
\par\medskip

Notice that all o-edges have the same length, since they are in the same orbit. Also, such length cannot be larger than $2$ for otherwise there would exist an o-edge of length $2$, the thickened one in Figure~\ref{fig:o-edges}.
\begin{figure}[!htbb]
\centering
\includegraphics[scale=1.2]{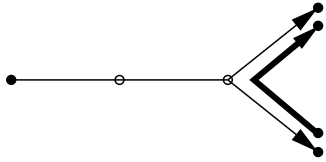}
\caption{o-edges of length $2$ and $3$ in the same orbit}
\label{fig:o-edges}
\end{figure}
Therefore, the transitivity of the $\Gamma$-action on $\Ek$ for every $k>0$ implies that
we are in the situation described in point (i) of Theorem~\ref{thm:classification}.

\subsection{Preparatory lemmas}\label{sub:tec}

From now on we assume that situations (i) and (ii) of Theorem~\ref{thm:classification} do not hold, so that there exists a labelling with the property that all good words can be realised as the label of some o-geodesic. Recall that part of the data of a labelling is an integer $k$, which we fix from now on.

To justify what comes next, we mention now that we want to use the following simple criterion to prove linear independence.

\begin{lemma}\label{homog}
 Let $\{c_n\}$ be non-trivial quasimorphisms on a group $\Gamma$.
Suppose that there exist elements $\{\eta_{n},h_{n}\}$ of $\Gamma$ so that for each positive integers $z,m,n$ we have
 $$c_n(\eta_{m}^z)=c_n(h_{m}^z)=0$$
 and
$$ \begin{cases}
  c_n((\eta_{m}h_{m})^z)=0 & {\rm if\ } m\neq n\\
  c_n((\eta_{m}h_{m})^z)\geq z-1 & {\rm if\ } m= n\\
 \end{cases}$$
 Then $\{[\delta^1c_n]\}$ are linearly independent in $\hb^2(\Gamma,\mathbb{R})$.
\end{lemma}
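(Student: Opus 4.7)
The plan is to argue via homogenization. Recall that for any quasimorphism $c\colon\Gamma\to\mathbb{R}$ the homogenization $\bar c(g):=\lim_{z\to\infty}c(g^z)/z$ is a homogeneous quasimorphism at bounded distance from $c$, so $[\delta^1 c]=[\delta^1\bar c]$ in $\hb^2(\Gamma,\mathbb{R})$; it is standard that this class vanishes if and only if $\bar c$ is a homomorphism. I would then suppose toward a contradiction that $\sum_{n\in F}\lambda_n[\delta^1 c_n]=0$ for some finite set $F$ and scalars $\lambda_n\in\mathbb{R}$, not all zero. Then $C:=\sum_{n\in F}\lambda_n c_n$ has trivial coclass, so its homogenization $\phi:=\sum_{n\in F}\lambda_n\bar c_n$ is a homomorphism $\Gamma\to\mathbb{R}$.

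Next, I would read off the values of the homogenizations directly from the hypotheses of the lemma. Since $c_n(\eta_m^z)=c_n(h_m^z)=0$ for all $z\geq 1$, we get
\[
\bar c_n(\eta_m)=\bar c_n(h_m)=0 \qquad \text{for all } n,m;
\]
the two-case assumption on $c_n((\eta_m h_m)^z)$ similarly yields $\bar c_n(\eta_m h_m)=0$ whenever $m\neq n$ and $\bar c_n(\eta_n h_n)\geq 1$.

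Finally, fix $m\in F$ and evaluate $\phi$ on $\eta_m h_m$. Since $\phi$ is a homomorphism vanishing on $\eta_m$ and on $h_m$, we have $\phi(\eta_m h_m)=0$; on the other hand, by the previous step all terms of $\sum_{n\in F}\lambda_n\bar c_n(\eta_m h_m)$ vanish except possibly the one with $n=m$, giving
\[
\phi(\eta_m h_m)=\lambda_m\bar c_m(\eta_m h_m),
\]
whose modulus is at least $|\lambda_m|$. Hence $\lambda_m=0$ for every $m\in F$, the desired contradiction.

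The argument is essentially routine once the dictionary between vanishing of $[\delta^1 c]$ in $\hb^2(\Gamma,\mathbb{R})$ and $\bar c$ being a homomorphism is in hand. The only conceptual point, and the main place requiring care, is the passage from a relation in bounded cohomology to an honest identity among the $\bar c_n$; the decisive input is then the careful design of the pairs $(\eta_m,h_m)$ built into the hypotheses, which renders the ``matrix'' of values $\bar c_n(\eta_m h_m)$ effectively diagonal with nonzero diagonal entries.
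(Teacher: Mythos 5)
Your proof is correct and is precisely the argument the paper leaves implicit: its entire proof is ``pick the homogenization of $c_n$,'' and your write-up fills in exactly that route (homogenize, pass a putative linear relation to a homomorphism, and evaluate on the $\eta_m h_m$ to diagonalize). No issues.
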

\begin{proof}
The conclusion easily follows by picking the homogenization of $c_n$.
\end{proof}

\begin{rem}
 We could use the same criterion with ``$=z$'' replacing ``$\geq z-1$'', but that would require being a bit more careful.
\end{rem}

For $w$ a word in $a,b$, we denote $w^{-1}$ the word obtained reading $w$ from right to left and replacing each label of an o(k)-geodesic with the label of the o(k)-geodesic with opposite orientation.

Fix from now on a choice of words $\{w_{ni}\}$ as in Lemma \ref{lem:triple}, as well as $n_0=n_0(k)$.

\begin{lemma}\label{geodoverlap}
 Suppose that for some integers $n,m\geq n_0$, some $i,j\in\{1,2,3\}$ and some $\epsilon_1,\epsilon_2\in\{\pm 1\}$ we have o-geodesics $\gamma_1,\gamma_2$ in $\TT$ labelled, respectively, by $w^{\epsilon_1}_{ni}$ and $w^{\epsilon_2}_{mj}$ that share a common o-subgeodesic of o-length at least $\min\{|\gamma_1|_o,|\gamma_2|_o\}/10-1$. Then $\epsilon_1=\epsilon_2, i=j,m=n$.
\end{lemma}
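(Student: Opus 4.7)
The plan is to transform the geometric overlap into a combinatorial one on labels and then apply Lemma~\ref{lem:triple} via a case analysis on the signs $\epsilon_1, \epsilon_2$. First, I would observe that reading off the label of the oriented common o-subgeodesic $\sigma$ of o-length $\ell$ from each of $\gamma_1, \gamma_2$ produces the same word $s$ of length $\ell - k + 1$, which is thus a common subword of $w^{\epsilon_1}_{ni}$ and $w^{\epsilon_2}_{mj}$. Using $|\gamma_r|_o = |w_{n_r i_r}| + k - 1$ for $r=1,2$, I would rewrite the hypothesis on $\ell$ as
\[
|s| \geq \frac{\min\{|w_{ni}|+k-1,\, |w_{mj}|+k-1\}}{10} - k,
\]
exactly the threshold appearing in Lemma~\ref{lem:triple}(ii)--(iii); and by possibly enlarging $n_0(k)$, I may assume this bound exceeds any prescribed constant.

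For $\epsilon_1 = \epsilon_2 = +1$, Lemma~\ref{lem:triple}(ii) immediately gives $n=m$ and $i=j$. For $\epsilon_1 = \epsilon_2 = -1$, I would apply the map $w \mapsto w^{-1}$, which is a length-preserving involution carrying a subword of $w$ to a subword of $w^{-1}$; so $s^{-1}$ is a common subword of $w_{ni}$ and $w_{mj}$ of the same length, and Lemma~\ref{lem:triple}(ii) again applies. The crux will be to rule out the mixed-sign cases, which by symmetry reduces to $\epsilon_1=+1, \epsilon_2=-1$. In this case $s$ has letters in $\{a,b\}$ (being a subword of $w_{ni}$), while each letter of its occurrence inside $w^{-1}_{mj}$ is the label of the inverse of an $a$- or $b$-labelled orbit.

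I would then invoke Remark~\ref{rem:inverses}. Under its condition~(2), $a^{-1} \in \{a,c\}$ and $b^{-1} \in \{b,c\}$, so forcing the letters of $s$ into $\{a,b\}$ forces $a^{-1} = a$ and $b^{-1} = b$ on the relevant range of $w_{mj}$; then that range of $w^{-1}_{mj}$ coincides with the corresponding range of $\overline{w_{mj}}$, exhibiting $s$ as a common subword of $w_{ni}$ and $\overline{w_{mj}}$ and contradicting Lemma~\ref{lem:triple}(iii). Under condition~(1), $a$ and $b$ each label a unique orbit; after choosing $n_0(k)$ large enough that $|s|$ exceeds the longest $b$-run of $w_{mj}$ (so that both letters occur in the relevant range), the inverse operation either fixes both labels (whence the previous argument applies) or swaps $a \leftrightarrow b$.

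The swap sub-case will be the main obstacle. There the relevant range of $w^{-1}_{mj}$ equals $\overline{w_{mj}}$ with $a$ and $b$ interchanged, which by Lemma~\ref{lem:triple}(i) is a concatenation of blocks $a^N b$ with $N \geq 2$ and hence has no $bb$ as subword, while $w_{ni}$ has no $aa$ as subword. Any common subword is therefore alternating in $\{a,b\}$; but an alternating word of length $\geq 3$ is either $aba$, ruled out in $w_{ni}$ because consecutive $a$'s there are separated by at least $100(3n+i)$ letters, or $bab$, ruled out in the swap for the symmetric reason. With $|s| \geq 3$ guaranteed by the choice of $n_0(k)$, this yields the contradiction, and I conclude $\epsilon_1 = \epsilon_2$ together with $n=m$ and $i=j$.
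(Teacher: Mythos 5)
Your overall route is the paper's: translate the geometric overlap into a common subword of length at least $\min\{|w_{ni}|+k-1,|w_{mj}|+k-1\}/10-k$, dispose of the equal-sign cases with Lemma~\ref{lem:triple}(ii), and rule out mixed signs by combining Remark~\ref{rem:inverses} with Lemma~\ref{lem:triple}(iii) and the $ab^N$-block structure from Lemma~\ref{lem:triple}(i). Your treatment of condition~(2) of the Remark and of the $a\leftrightarrow b$ swap sub-case (no $aa$ in $w_{ni}$, no $bb$ after the swap, alternating words have length at most $2$) is, if anything, more explicit than the paper's ``it is easily seen''.

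One step fails as written, though it is repairable. You cannot enlarge $n_0(k)$ so that $|s|$ exceeds the longest $b$-run of $w_{mj}$: that run has length about $100(3m+j)$, and $m$ is not bounded in terms of $n$, so when $m\gg n$ the guaranteed lower bound on $|s|$ (roughly $|w_{ni}|/10$, when $n$ realises the minimum) is smaller than a single $b$-run of $w_{mj}$. You use this only to force both letters of $w_{mj}$ to appear in the relevant range; the degenerate alternative --- the range meets no $a$ of $w_{mj}$ --- must instead be excluded by noting that $s$ would then be a constant word, and a constant word of length $|s|\ge (|w_{ni}|+k-1)/10-k>100(3n+i)+100$ is never a subword of $w_{ni}$. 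Relatedly, your dichotomy ``fixes both labels or swaps them'' under condition~(1) silently discards the possibility that the inverse of the $a$-orbit (or $b$-orbit) is labelled $c$; this is excluded by the same kind of run-length observation, or, as the paper does, by first extracting from Lemma~\ref{lem:triple}(iii) a single position of the overlap where an o($k$)-geodesic labelled $a$ or $b$ coincides with the inverse of one labelled $b$ or $a$ --- since every o($k$)-piece of the overlap carries a letter of $w_{ni}$ on one side and of $w_{mj}$ on the other, both labels lie in $\{a,b\}$ automatically and the $c$ cases never arise. With these repairs your argument is complete and coincides with the paper's.
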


Notice that $|\gamma_1|_o=|w^{\epsilon_1}_{ni}|+k-1$.

\begin{proof}
Notice that if two o-geodesics share a common subgeodesic of o-length, say $L$, then their labels have a common subword of length $L-k+1$.

 If $\epsilon_1=\epsilon_2$, then it is easy to get $i=j,m=n$ because $w_{ni}$ and $w_{mj}$ can share a long subword only if they coincide.
 
 We will now argue that we cannot have, say $\epsilon_1=+1$ and $\epsilon_2=-1$. Suppose by contradiction that this is the case. Notice that by Lemma \ref{lem:triple}-(iii) there must be an o(k)-geodesic labelled $a$ or $b$ (contained in $\gamma_1$) that coincides with the inverse of an o(k)-geodesic labelled, respectively, $b$ or $a$ (contained in $\gamma_2$). In the second case of Remark \ref{rem:inverses}, this cannot happen, hence we can suppose that the labels $a,b$ correspond to one orbit of o(k)-geodesics each. In this case we get that the label $w'_{mj}$ of $\gamma_2$ is obtained reading $w_{mj}$ right-to-left and replacing each $a$ with $b$ and vice versa. It is easily seen that $w'_{mj}$ and $w_{nj}$ cannot share a long subword in view of Lemma \ref{lem:triple}-(i).
\end{proof}

% \begin{rem}
%  The Lemma has two separate applications. In one of them (Lemma \ref{actuallabel}) we need a common subgeodesic of length $\min\{|\gamma_1|_o,|\gamma_2|_o\}/10$, while in the other one (the proof of Theorem \ref{thm:classification}) a common subgeodesic of length $\min\{|\gamma_1|_o,|\gamma_2|_o\}/2-1$
% \end{rem}

\begin{defi}
Let us call \emph{$w^{\pm 1}_{ni}$-subgeodesic} an o-subgeodesic of an o-geodesic labelled 
$w^{\pm 1}_{ni}$. A  \emph{long $w^{\pm 1}_{ni}$-subgeodesic} is  a $w^{\pm 1}_{ni}$-subgeodesic of o-length at least $(|w_{ni}|+k-1)/2$.
\end{defi}
\begin{defi}
The \emph{almost concatenation} of the o-geodesics $\gamma_1$ and $\gamma_2$ (if it exists)
is the o-geodesic obtained concatenating either $\gamma_1$ and $\gamma_2$   
or $\gamma_1$, an o-edge and $\gamma_2$. Almost concatenation of more than two o-geodesics can be defined similarly.
\end{defi}

\begin{lemma}\label{actuallabel}
 There exist elements $g_{ni}\in \Gamma$ so that the following hold. 
 \begin{enumerate}
\item There exists an o-geodesic $s_n$ obtained almost concatenating a long $w_{n3}$-subgeodesic and a long $w_{n1}$-subgeodesic with the following property. 
For each positive integer $N$, the o-geodesic  from $v$ to $(g_{n1}g_{n3})^Nv$ is the almost concatenation of a long $w_{n1}$-subgeodesic, $N-1$ translates of $s_n$ and a long $w_{n3}$-subgeodesic.
  \item For each positive integer $N$, the o-geodesic from $v$ to $(g_{n1}g_{n2})^Nv$ (resp. $(g_{n2}^{-1}g_{n3})^Nv$) is obtained alternately almost concatenating  long $w_{n1}$-subgeodesics and  long $w_{n2}$-subgeodesics (resp.~long $w_{n2}^{-1}$-subgeodesic  and  long $w_{n3}$-subgeodesics).
 \end{enumerate}
\end{lemma}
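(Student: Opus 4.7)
The plan is to produce each $g_{ni}$ as an element behaving like a loxodromic isometry whose ``fundamental domain through $v$'' is an o-geodesic $\sigma_{ni}$ starting at $v$ and carrying a good label $W_{ni}$ that contains $w_{ni}$ as a subword strictly longer than $(|w_{ni}|+k-1)/2-(k-1)$ (so that the corresponding o-subgeodesic is ``long'' in the sense of the definition), padded on both sides by a small buffer of extra letters. The products $g_{n1}g_{n3}$, $g_{n1}g_{n2}$, and $g_{n2}^{-1}g_{n3}$ will then act as translations along concatenated pairs of such fundamental domains, and the decomposition required by the lemma will be read off the resulting labels.

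To construct $g_{ni}$, I use the realisation result of Subsection~\ref{sub:labellings}: since $w_{ni}$ is good, so is any concatenation of $w_{ni}$ with additional syllables, hence there is an o-geodesic $\sigma_{ni}$ starting at $v$ whose label has the prescribed form. Its endpoint is an o-vertex, so by the definition of o-vertex there exists $g_{ni}\in\Gamma$ with $g_{ni}v$ equal to that endpoint. The non-trivial point is then to verify that each of the concatenations $\sigma_{n1}\cdot g_{n1}\sigma_{n3}$, $\sigma_{n1}\cdot g_{n1}\sigma_{n2}$, and $\sigma_{n2}^{-1}\cdot g_{n2}^{-1}\sigma_{n3}$ is a genuine o-geodesic, i.e., does not backtrack at the central o-vertex. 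This is where Remark~\ref{rem:inverses} is used: in its first alternative, the letters $a$ and $b$ uniquely determine orbits of o$(k)$-geodesics, so we can select the end-buffer of one factor and the start-buffer of the next to belong to distinct orbits; in its second alternative, the condition that the inverse of an $a$-labelled o$(k)$-geodesic is never $b$-labelled rules out the only remaining bad matching. When a single buffer letter does not suffice (for instance because in a particular case all available buffer letters are forced into a single orbit), an extra o-edge is inserted between the two factors, which is exactly the flexibility built into the definition of almost concatenation.

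Having established non-backtracking for a single concatenation, a straightforward induction shows that the o-geodesic from $v$ to $(g_{n1}g_{n3})^N v$ is the almost concatenation of the $N$ translates $(g_{n1}g_{n3})^j(\sigma_{n1}\cdot g_{n1}\sigma_{n3})$, $j=0,\dots,N-1$, and similarly for the other two products. Setting $s_n:=\sigma_{n3}\cdot g_{n3}\sigma_{n1}$ and regrouping the resulting label as $W_{n1}\cdot(W_{n3}\cdot W_{n1})^{N-1}\cdot W_{n3}$, one reads off a long $w_{n1}$-subgeodesic at the start, $N-1$ translates of $s_n$ in the middle, and a long $w_{n3}$-subgeodesic at the end, giving~(1); the assertion in~(2) follows identically. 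I expect the main obstacle to be the case-by-case verification of non-backtracking: the argument has to be run through each of the sub-cases (1a)--(1c), (2a) and (3a) of Subsection~\ref{sub:labellings}, and the choice of buffers and of the extra o-edge (if any) has to be tailored to the specific structure of the labelling in that sub-case, using both alternatives of Remark~\ref{rem:inverses}.
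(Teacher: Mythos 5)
There is a genuine gap, and it lies exactly where you locate the ``main obstacle'': the non-backtracking of the concatenations cannot in general be arranged, and the paper does not attempt to arrange it. Your plan needs the specific translate $g_{n1}\sigma_{n3}$ to chain with $\sigma_{n1}$ at the o-vertex $g_{n1}v$; but once $\sigma_{n1}$ is fixed, $g_{n1}$ is determined up to $\stab_\Gamma(v)$, which may well be trivial, so $g_{n1}\sigma_{n3}$ is a forced geodesic that can perfectly well backtrack into $\sigma_{n1}$. The realisation machinery of the labelling section (chainability, Remark~\ref{rem:inverses}) only guarantees that \emph{some} translate of the next o$(k)$-geodesic chains with the previous one -- it is an existence statement used to build a single o-geodesic incrementally, and it gives you no control over a prescribed translate. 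Choosing ``buffers'' does not help, because the buffer letters again only constrain orbits, not representatives. Moreover, the fallback of ``inserting an extra o-edge between the two factors'' is not available to you: once $g_{n1}$ and $g_{n3}$ are fixed, the geodesic $[v,(g_{n1}g_{n3})^Nv]$ is what it is; the ``almost concatenation'' in the statement describes how that geodesic turns out to decompose, it is not a degree of freedom in the construction.

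The paper's proof goes the other way: it simply picks $g_{ni}$ with $[v,g_{ni}v]$ labelled $w_{ni}$, writes the (non-geodesic) path from $v$ to $(g_{n1}g_{n3})^Nv$ as a concatenation of $2N$ translates alternately labelled $w_{n1}$ and $w_{n3}$, and then \emph{bounds} the backtracking at each junction instead of excluding it. The cancelled portion at a junction is a common o-subgeodesic of a $w_{n1}^{\pm1}$-labelled and a $w_{n3}^{\mp1}$-labelled geodesic, so Lemma~\ref{geodoverlap} caps its o-length at roughly a tenth of the pieces involved; hence each piece survives in the honest geodesic as a long ($>1/2$) subgeodesic, the leftover between consecutive surviving pieces is absorbed by the ``almost concatenation'' allowance, and the middle blocks are genuine translates of one another (hence of a single $s_n$) by periodicity under $g_{n1}g_{n3}$. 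If you want to repair your write-up, replace the non-backtracking claim by this overlap bound; as it stands, the step ``the concatenation is a genuine o-geodesic'' is both unproved and, in general, false.
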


\begin{proof}

\begin{figure}[!htbb]
\centering
\psfrag{w3}{$w_{n3}$}
\psfrag{w2}{$w_{n1}$}
\includegraphics[scale=0.8]{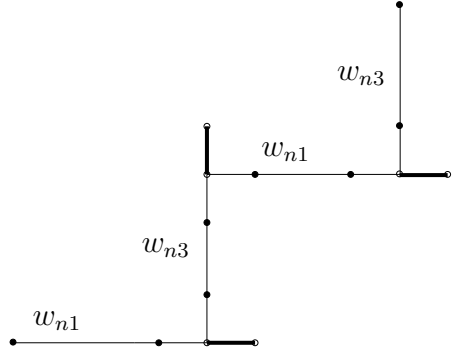}
\caption{the o-geodesic $[v,(g_{n1}g_{n3})^2v]$}
\label{concatenation}
\end{figure}
 Chose $g_{ni}$ so that the geodesic from $v$ to $g_{ni}v$ is labelled $w_{ni}$. Let us study the o-geodesic $[v,(g_{n1}g_{n3})^2v]$, the other cases being similar. We can form a path from $v$ to $(g_{n1}g_{n3})^2v$ by concatenating $4$ geodesics, alternately labelled $w_{n1}$ and $w_{n3}$. Since by Lemma \ref{geodoverlap} we can bound the overlap of geodesics labelled $w_{n1}$ (resp. $w_{n3}$) and geodesics labelled $w_{n3}^{-1}$ (resp. $w_{n1}^{-1}$), the conclusion easily follows, see Figure \ref{concatenation}.
\end{proof}

\subsection{Conclusion of proof of Theorem~\ref{thm:classification}}

Let $s_n$ be as in Lemma~\ref{actuallabel}. We want to apply Lemma \ref{homog} to $c_n=f_{\rho(\Gamma) s_n}$, $\eta_n=g_{n1}g_{n2}$, $h_n=g_{n2}^{-1}g_{n3}$ for $n$ sufficiently large. Let us for example show $c_n((g_{m2}^{-1}g_{m3})^z)=0$, the other cases being similar (it is also worthwhile to point out that the geodesic $[v,(g_{n1}g_{n3})^zv]$ contains at least $z-1$ translates of $s_n$ by Lemma \ref{actuallabel}-(1)). If there was a translate of $s_n^{\pm1}$ contained in $[v,(g_{m2}^{-1}g_{m3})^zv]$, then a suitable translate of $s_n$ would contain a $w_{n1}^{\pm 1}$-subgeodesic $\gamma$ of length $(|w^{\pm1}_{n1}|+k-1)/4-1$ with one of the following properties:
\begin{itemize}
\item $\gamma$ is contained either in a long $w_{m2}^{-1}$-subgeodesic or in a long $w_{m3}$-subgeodesic, or
\item $\gamma$ contains either a long $w_{m2}^{-1}$-subgeodesic or a long $w_{m3}$-subgeodesic.
\end{itemize}
All configurations are excluded by Lemma~\ref{geodoverlap}.
By definition of $f_{\rho(\Gamma) s_n}$, this implies $c_n((g_{m2}^{-1}g_{m3})^z)=0$.\qed

\section{Remarks on actions fixing a point at infinity}\label{remarks}
%We recall that we have fixed notations in Section~\ref{sec:notation}.

In this section we make an observation about actions as in Theorem \ref{thm:classification}-(ii).

For $a\in \partial \TT$ we denote by $\beta_a$ any Busemann function based at $a$ so that, for any vertex $w$ of $\TT$, $\beta_a(w)$ is an integer. Also, for $a\in\partial \TT$ we denote by $\En_\beta$ the set of \emph{monotone geodesics} of length $n$, meaning geodesics of length $n$ along which $\beta_a$ is increasing.

\begin{prop}\label{prop:busemann}
 Suppose that $\Gamma$ acts minimally on the tree $\TT$ and that every vertex of $\TT$ has valence greater than 2. Suppose that $\Gamma$ fixes a point $a$ in the boundary $\partial \TT$ of $\TT$.
Then there exists an integer $l$ so that, for each integer $n$, $\Gamma$ acts transitively on
$$\{[x,y]\in\En_\beta: \beta_a(x) \equiv 0 \, (l)\}.$$
\end{prop}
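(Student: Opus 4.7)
The plan is to exploit that $\Gamma$ fixing $a$ yields a Busemann cocycle $c\colon\Gamma\to\mathbb Z$, and that monotone $n$-geodesics are rigidly determined by their starting vertex, so the transitivity claim for every $n$ simultaneously reduces to a single assertion about vertex orbits.

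First I would define the cocycle. For $g\in\Gamma$ and any vertex $x$, the integer $c(g):=\beta_a(gx)-\beta_a(x)$ is independent of $x$, because $g\cdot\beta_a$ and $\beta_a$ are both Busemann functions based at the fixed point $a$ and hence differ by a constant; this gives a homomorphism $c\colon\Gamma\to\mathbb Z$. Let $l$ be the non-negative generator of the image $c(\Gamma)=l\mathbb Z$. I would rule out $l=0$ by a minimality argument: otherwise $\Gamma\cdot v$ lies in the horosphere $\beta_a^{-1}(0)$, so its convex hull is contained in the horoball $\{\beta_a\leq 0\}$, a proper $\Gamma$-invariant subtree of $\TT$, contradicting minimality.

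Next I would observe that a monotone $n$-geodesic starting at $x$ is the unique length-$n$ initial segment of the geodesic ray from $x$ to $a$, and so is determined by $x$. For every $n$ the set $\{[x,y]\in\En_\beta:\beta_a(x)\equiv 0\pmod l\}$ is thus in $\Gamma$-equivariant bijection with $V_0:=\{x\in V(\TT):\beta_a(x)\equiv 0\pmod l\}$, and the claim reduces to showing that $V_0$ is a single $\Gamma$-orbit. Normalising $\beta_a(v)=0$, the inclusion $\Gamma\cdot v\subseteq V_0$ is immediate from $c(\Gamma)=l\mathbb Z$, so the real task is the reverse inclusion $V_0\subseteq\Gamma\cdot v$, after which transitivity on $V_0$ is tautological.

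For this reverse inclusion I would invoke the structural conclusion of Case~(2c) in the proof of Theorem~\ref{thm:classification}, which is the only case consistent with hypothesis (ii). That case equips $\TT$ with a $\Gamma$-invariant edge orientation in which every edge of $\TT$ belongs to some o-edge in the $\Gamma$-orbit of $e_1$ or of $e_1^{-1}$. Consequently every vertex of $\TT$ is either an o-vertex (hence in $\Gamma\cdot v$) or an interior vertex of an o-edge. All o-edges share the same geometric length $l'$, and a short computation with a hyperbolic $t\in\Gamma$ realising $c(t)=l$, whose axis terminates at $a$ and is translated by $|c(t)|=l$, identifies $l'$ with $l$. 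Hence the o-vertices are exactly the vertices at $\beta_a$-levels in $l\mathbb Z$, giving $V_0=\Gamma\cdot v$.

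The main obstacle is the reliance on the Case~(2c) structure: one must verify that the hypotheses of the proposition force the action into that case (which follows from the mutual exclusivity of the trichotomy in Theorem~\ref{thm:classification} together with the fact that only Case~(2c) realises condition (ii)) and that the geometric o-edge length coincides with $l$. Once these are in hand the identification $V_0=\Gamma\cdot v$ is immediate and the transitivity assertion of the proposition holds tautologically for every $n$.
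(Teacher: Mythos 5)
Your opening moves are sound and match the spirit of the paper's first step: the Busemann homomorphism $c(g)=\beta_a(gx)-\beta_a(x)$ is well defined because $g$ fixes $a$, its image $l\mathbb{Z}$ is non-trivial by the minimality/horoball argument (this is exactly how the paper produces a hyperbolic element), and, granting that a monotone $n$-geodesic is determined by its initial vertex, the proposition does reduce to showing that the vertices with $\beta_a\equiv 0\,(l)$ form a single orbit. (Note, though, that this uniqueness depends on the sign convention for $\beta_a$: if ``increasing'' means pointing away from $a$, each vertex carries many monotone $n$-geodesics and your reduction silently drops the required transitivity of vertex stabilisers on descendant spheres.)

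The genuine gap is in the inclusion $V_0\subseteq\Gamma\cdot v$, which is the entire content of the statement, and the appeal to Case~(2c) does not deliver it. First, Case~(2c) sits inside Case~(2), whose standing hypothesis is that $\Gamma$ is \emph{not} transitive on o-edges; there are at least two orbits of o-edges (those of $e_1$ and of $e_2$), and nothing forces them to have equal geometric length, so the quantity $l'$ in your ``short computation'' is not well defined. Second, and more seriously, what you actually need is that no interior vertex of any o-edge lies at a level in $\beta_a(v)+l\mathbb{Z}$; the invariant orientation of Case~(2c) says nothing about the $\beta_a$-levels of such vertices (for instance, an o-edge joining two o-vertices at the same level has a ``peak'' whose level is a priori uncontrolled), and ruling out these levels is essentially equivalent to the statement being proved, so the argument is circular at its core. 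The mechanism the paper uses to identify two vertices at the same level is the idea your proposal is missing: every vertex lies on the axis of some hyperbolic element (the union of all axes is a non-empty invariant subtree, hence all of $\TT$ by minimality), all these axes terminate at $a$ and therefore pairwise share a ray towards $a$, so suitable powers of the two hyperbolic elements carry the two vertices onto that common ray at the same level, where they must coincide. This also avoids your detour through the full trichotomy of Theorem~\ref{thm:classification}, which your reduction to Case~(2c) needs in full strength (including the quasimorphism construction used to exclude the other cases).
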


\begin{proof}
 First of all, let us show that there exists some $h\in\Gamma$ that acts hyperbolically on $\TT$. Pick any $v\in\TT$. It is easily seen that any element $h\in\Gamma$ so that $\beta_a(hv)\neq \beta_a(v)$ acts hyperbolically, because, up to exchanging $v$ and $hv$ and replacing $h$ with $h^{-1}$, a subray of $[v,a)$ gets mapped by $h$ into a proper subray. Since there are no leaves in $\TT$, such $h$ must exist.
 
 In order to prove the proposition, we can fix a basepoint $v$ on the axis of $h$ and show that for any integer $k$, $\Gamma$ acts transitively on the set of monotone $o(k)$-geodesics. This is clear when  we fix the axis of a conjugate of $h$ and we restrict to monotone $o(k)$-geodesics contained in that axis. Notice now that for any $g\in\Gamma$, the axis of $h^g$ passes through $gv$ and shares a common subray with the axis of $h$. Hence, for any $g\in\Gamma$, the monotone $o(k)$-geodesic starting at $gv$ is in the same orbit as a monotone $o(k)$-geodesic contained in the axis of $h$, and hence it is in the same orbit as the monotone $o(k)$-geodesic starting at $v$.
\end{proof}

\section{Local $\infty$-transitivity and quasimorphisms}\label{sec:transitivity}
In this section we prove Corollary~\ref{cor:locally_inf_trans}.
We start with the following easy observation:

\begin{lemma}\label{lem:transitive_on_spheres}  Let $\TT$ be a simplicial tree and let $\Gamma<\aut(\TT)$.
Let us suppose that we are in the situation (i) of Theorem~\ref{thm:classification}, 
that is there exists $l\in \{1,2\}$ so that for each $n$, $\Gamma$ acts transitively on
$$\{[x,y]\in\En: \ d(x,v) \equiv 0\, (l)\},$$
where $v$ is any vertex of $\TT$.
Then there exists a vertex $x_0$ such that 
the stabiliser $\stab_\Gamma(x_0)$ acts transitively on all spheres centered at $x_0$.
\end{lemma}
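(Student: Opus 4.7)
The plan is to take $x_0 = v$ itself and reduce the lemma to a direct unwinding of the hypothesis. First I would observe that the parity condition appearing in the hypothesis becomes automatic when the candidate vertex is chosen to be the reference vertex: indeed $d(v,v) = 0 \equiv 0 \pmod{l}$ for both admissible values $l \in \{1,2\}$, so every oriented $n$-geodesic starting at $v$ belongs to the set $\{[x,y]\in\En : d(x,v) \equiv 0 \pmod{l}\}$ on which $\Gamma$ is assumed to act transitively.

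Next I would invoke the elementary fact that the endpoint map $[x_0,y]\mapsto y$ is a bijection between oriented $n$-geodesics starting at $x_0$ and points of the sphere $S_n(x_0):=\{y\in\TT : d(x_0,y)=n\}$. The key point is that any $g\in\Gamma$ sending $[x_0,y]$ to $[x_0,y']$ must carry initial vertex to initial vertex, and therefore automatically lies in $\stab_\Gamma(x_0)$.

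Combining the two ingredients, for any integer $n$ and any pair $y,y'\in S_n(x_0)$ the transitivity hypothesis applied to $[x_0,y]$ and $[x_0,y']$ supplies some $g\in\Gamma$ with $g[x_0,y]=[x_0,y']$, and the observation above forces $g\in\stab_\Gamma(x_0)$ with $gy=y'$. Since $n$ was arbitrary, $\stab_\Gamma(x_0)$ acts transitively on every sphere centered at $x_0$. I do not expect any genuine obstacle here: the whole content of the lemma is the remark that the parity constraint on the starting point vanishes once $x_0$ is chosen to be $v$, after which the statement reduces to a definitional unwinding.
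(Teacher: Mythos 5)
Your proof is correct and follows essentially the same route as the paper: take $x_0=v$, note that the parity condition is vacuous for geodesics starting at $v$, and observe that any $g$ mapping one oriented $n$-geodesic issuing from $v$ to another must fix $v$ and hence lies in $\stab_\Gamma(v)$ while moving the endpoint as required. The paper's own proof is just a terser version of exactly this argument.
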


\begin{proof} 
%If $l=2$ let $x_0:=v$, and if $l=1$ let $x_0$ be any vertex at distance $1$ from $v$.
%Then 
Taking $x_0:=v$ we have that, for any $n$, $\Gamma$ is transitive on paths of length $n$ starting at $x_0$.
Hence the stabiliser $\stab_\Gamma(x_0)$ acts transitively on the sphere of radius $n$
centered at $x_0$.
\end{proof}

Let $\TT$ be a locally finite tree with $\VV$ the set of vertices and $\EE$ the set of edges.
If $e\in\EE$, we denote by $\TT_e^+$ 
%(respectively $\TT_e^-$) 
the connected component of $\TT\smallsetminus\{e\}$
containing the terminus $t(e)$ of $e$. 
%(respectively the origin $o(e)$ of $e$).

\begin{lemma}\label{lem:cofinite+nofixedpoints=minimal}  
Let $\TT$ be a locally finite tree such that each vertex has valence at least $2$.  Let $\Lambda<\aut(\TT)$ be
a subgroup such that $\Lambda\backslash\TT$ is finite and there is no fixed point in the boundary $\partial\TT$.
Then $\Lambda$ acts minimally on $\TT$.
\end{lemma}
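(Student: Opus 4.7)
The plan is a short proof by contradiction via a maximum principle. Suppose that $\TT'\subsetneq\TT$ is a proper nonempty $\Lambda$-invariant subtree, and define $\phi\colon V(\TT)\to\mathbb Z_{\geq 0}$ by $\phi(x):=d(x,\TT')$. Since $\TT'$ is $\Lambda$-invariant, $\phi$ is constant on each $\Lambda$-orbit of vertices and hence factors through the finite quotient $\Lambda\backslash V(\TT)$; in particular $\phi$ attains a finite maximum $D$, and $D\geq 1$ because $\TT'\subsetneq\TT$.

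The heart of the argument is then to use the valence hypothesis to contradict the maximality of $D$. Pick a vertex $v$ with $\phi(v)=D$, let $p$ be the unique vertex of $\TT'$ at distance $D$ from $v$, and let $v_1$ be the neighbour of $v$ on the geodesic $[v,p]$. Because $v$ has valence at least $2$, there exists a neighbour $v'\neq v_1$ of $v$. Since $\TT$ is a tree, removing the vertex $v$ disconnects $\TT$ into components, and $\TT'$ lies entirely in the component containing $v_1$ (indeed $[v_1,p]\subseteq\TT'$ and $v\notin\TT'$ since $\phi(v)\geq 1$). Hence $v'$ lies in a distinct component, and every path from $v'$ to $\TT'$ must traverse $v$. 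This gives
\[
\phi(v')=d(v',v)+d(v,\TT')=1+D,
\]
contradicting the maximality of $D$. So $D=0$ and $\TT'=\TT$, proving that $\Lambda$ acts minimally.

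I do not foresee any real obstacle: once cofiniteness forces $\phi$ to attain its maximum and valence $\geq 2$ produces the escape neighbour $v'$, the tree-separation property finishes the proof in one line. The only subtle point worth flagging is that the hypothesis of no fixed point on $\partial\TT$ is apparently unused in this direct argument. I would double-check whether it is included only for framing (it is certainly natural in light of the preceding discussion), or whether the authors envisage a different route, e.g.~realising the minimal subtree as the union of axes of hyperbolic elements of $\Lambda$, in which case the boundary assumption would be needed to produce such hyperbolic elements. In any case, cofiniteness together with valence $\geq 2$ already prevents $\Lambda$ from having a global fixed vertex or invariant edge, since otherwise $\TT$ would be a finite tree with every vertex of valence $\geq 2$, which is impossible.
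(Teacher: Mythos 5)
Your proof is correct, and it takes a genuinely different route from the paper's. The paper argues in two stages: first it shows that the $\Lambda$-action on $\partial\TT$ is minimal (if $F\subsetneq\partial\TT$ is closed, non-empty and invariant, it takes the convex hull $\TT'$ of $F$ and observes that vertices deep inside a half-tree $\TT_e^+$ disjoint from $\TT'$ have orbits staying arbitrarily far from $\TT'$, contradicting finiteness of a fundamental domain); then it shows that a proper invariant subtree would have proper, non-empty boundary, using valence $\geq 2$ to extend an edge leaving the subtree to a ray. Your argument short-circuits the boundary entirely: the same cofiniteness input bounds $\phi=d(\cdot,\TT')$, and valence $\geq 2$ lets you step one unit further from $\TT'$ at a maximum of $\phi$, which is a cleaner contradiction. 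Your observation about the unused hypothesis is also right: the ``no fixed point in $\partial\TT$'' assumption does no work in your argument, whereas in the paper's proof it is what guarantees that the closed invariant set $F$ is not a singleton (a singleton would have empty convex hull, and the first stage would collapse). So your version actually proves a marginally stronger statement; the trade-off is that the paper's route also yields minimality of the boundary action, which is a natural intermediate fact, though it is not used elsewhere in the paper. One small point of hygiene that you handle correctly but is worth stating explicitly: properness of the subtree $\TT'$ forces some \emph{vertex} to lie outside $\TT'$ (a subtree containing both endpoints of an edge contains the edge), so $D\geq 1$ is indeed justified.
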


\begin{proof}  We will show first that the $\Lambda$-action on the boundary $\partial\TT$
is minimal, and conclude that the action on the tree is minimal as well.

Let $F\subseteq\partial\TT$ be a closed non-empty $\Lambda$-invariant subset.
We suppose that $F\neq\partial\TT$.
Let $\TT'$ be the convex hull of $F$, that is the subtree consisting of all bi-infinite geodesics connecting two
elements in $F$.  Let $\xi\in\partial\TT\smallsetminus F$ and let $e\in \EE$ be an edge in $\TT$ such that 
$\xi\in\partial\TT_e^+\subset\partial\TT\smallsetminus F$.  
Let $v_n\in \TT^+_e$ be a vertex with $d(v_n,t(e))=n$.  Since $\TT_e^+$ and $\TT'$ are disjoint,
$d(\TT',v_n)\geq n$ and, by invariance, $d(\TT',\lambda v_n)\geq n$ for all $\lambda\in\Lambda$.
Let $D\subset\VV$ be a fundamental domain for the $\Lambda$-action in $\VV$.
Since $D$ is finite, there exists $n_0\in\mathbb N$ such that $d(v,\TT')\leq n_0$ for all $v\in D$.
By choosing $n$ with $n>n_0$ we see that $D$ cannot intersect the $\Lambda$-orbit of $v_n$,
which is a contradiction.  Thus $\Lambda$ acts minimally on $\partial\TT$.

Let now $\TT'\subset\TT$ be a $\Lambda$-invariant non-empty subtree.
Then $\partial\TT'\subseteq\partial\TT$ is a closed non-empty $\Lambda$-invariant subset
and we will show that $\partial\TT'\neq\partial\TT$.  
Since $\TT'\neq\TT$, there exists a vertex $v\in\TT'$ 
such that the valence of $v$ in $\TT'$ is strictly smaller than the valence of $v$ in $\TT$.
Let $v'\in\VV(\TT)$ be an adjacent vertex with $v'\notin\VV(\TT')$
and let $e:=(v,v')$ be the corresponding oriented edge in $\TT$.
Since the valence of each vertex is at least $2$, this edge can be extended
to an infinite ray, so that $\partial\TT_e^+\neq\varnothing$.  
But $\TT_e^+\cap\TT'=\varnothing$, so that $\partial\TT_e^+\cap\partial\TT'=\varnothing$
and hence $\partial\TT'\neq\partial\TT$.
\end{proof}

In order to prove Corollary~\ref{cor:locally_inf_trans} , we recall the following lemma from \cite{Burger_Mozes_gat}:

\begin{lemma}[{\cite[Lemma~3.1.1]{Burger_Mozes_gat}}]\label{lem:3.3.1}  Let $\TT$ be a regular tree with vertex set $\VV$
and $H<\aut(\TT)$ a closed subgroup.  Then the following assertions are equivalent:
\begin{enumerate}
\item $H$ is locally $\infty$-transitive;
\item $\stab_H(v)$ is transitive on $\partial\TT$ for every vertex $v\in \VV$;
\item $H$ is non-compact and acts transitively on $\partial\TT$;
\item $H$ is doubly transitive on $\partial\TT$. 
\end{enumerate}
\end{lemma}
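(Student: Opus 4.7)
The plan is to prove the cycle $(1)\Rightarrow(2)\Rightarrow(3)\Rightarrow(4)\Rightarrow(1)$, exploiting throughout that regularity of $\TT$ makes it locally finite, so that every ambient vertex stabiliser $\stab_{\aut(\TT)}(v)$ is compact profinite and hence $\stab_H(v)=H\cap\stab_{\aut(\TT)}(v)$ is compact.

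The equivalence $(1)\Leftrightarrow(2)$ I would prove by identifying $\partial\TT$ with the inverse limit of the spheres $S_n(v)$ via ray prefixes. For $(1)\Rightarrow(2)$, given $\xi,\eta\in\partial\TT$, pick $g_n\in\stab_H(v)$ carrying the $n$-th vertex on the ray $v\to\xi$ to that on $v\to\eta$; by compactness of $\stab_H(v)$ a subnet converges to some $g\in\stab_H(v)$ with $g\xi=\eta$. The converse is immediate, since an element of $\stab_H(v)$ sending $\xi$ to $\eta$ automatically sends the initial segments of the two rays to each other. For $(2)\Rightarrow(3)$, transitivity of $H$ on $\partial\TT$ is clear; non-compactness follows from the Bruhat--Tits fixed-point theorem: if $H$ were compact it would fix some vertex $v_0$, so $H=\stab_H(v_0)$, and then for any $v\neq v_0$ the subgroup $\stab_H(v)$ would lie in the pointwise stabiliser of $[v_0,v]$, in particular fixing the neighbour of $v$ on that geodesic and so failing to be transitive on $\partial\TT$.

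The crucial step is $(3)\Rightarrow(4)$. Non-compactness of $H$ produces a hyperbolic element $h\in H$ with distinct attracting/repelling fixed ends $\xi_\pm\in\partial\TT$, and the iteration $h^n\mu\to\xi_+$ for $\mu\neq\xi_-$ gives the usual north--south contraction dynamics. Combining this contraction with transitivity of $H$ on $\partial\TT$, I would realise any ordered pair $(\eta_1,\eta_2)$ of distinct ends as the image of $(\xi_+,\xi_-)$ under some element of $H$: first bring $\xi_+$ to $\eta_1$ by transitivity, then use a conjugate contraction fixing $\eta_1$ to push the image of $\xi_-$ into an arbitrarily small neighbourhood of $\eta_2$, and finally correct the residual error via an element of $\stab_H(\eta_1)$ (again supplied by transitivity of $H$ on $\partial\TT$ combined with compactness of vertex stabilisers), passing to a limit. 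Finally, $(4)\Rightarrow(1)$: given $v\in\VV$ and $u_1,u_2\in S_n(v)$, extend $[v,u_i]$ to rays with endpoints $\xi_i$ and choose an auxiliary end $\mu$ so that $v$ lies on the axis $(\mu,\xi_1)$; double transitivity supplies $g\in H$ with $g(\xi_1)=\xi_2$ and $g(\mu)=\mu$, which transports the axis $(\mu,\xi_1)$ onto $(\mu,\xi_2)$, and any residual translation of $g$ along $(\mu,\xi_2)$ can be cancelled by composing with an element of the stabiliser of $(\mu,\xi_2)$ in $H$ (again produced by double transitivity) so as to achieve $g(v)=v$.

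The main obstacle I anticipate is $(3)\Rightarrow(4)$: converting the qualitative hypotheses of transitivity and non-compactness into the sharp dynamical statement of double transitivity demands a careful coordination of the contraction dynamics of hyperbolic elements with transitivity on the boundary, together with a limit/completeness argument (using compactness of vertex stabilisers) to ensure that the approximating elements of $H$ actually converge to an element of $H$ with the required effect.
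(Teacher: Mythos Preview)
The paper does not prove this lemma at all: it is quoted verbatim from Burger--Mozes \cite[Lemma~3.1.1]{Burger_Mozes_gat} and used as a black box in the proof of Corollary~\ref{cor:locally_inf_trans}. So there is no argument in the paper to compare your outline against.

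That said, your plan follows the standard route and the implications $(1)\Leftrightarrow(2)$ and $(2)\Rightarrow(3)$ are fine as sketched. Two points deserve more care. In $(3)\Rightarrow(4)$, the sentence ``use a conjugate contraction fixing $\eta_1$ to push the image of $\xi_-$ into an arbitrarily small neighbourhood of $\eta_2$'' is not correct as stated: a hyperbolic element fixing $\eta_1$ contracts towards $\eta_1$ (or towards its \emph{other} fixed point), not towards an arbitrary $\eta_2$. What one actually does is use transitivity to produce a hyperbolic element whose repelling fixed point is $\eta_1$ and whose attracting fixed point is in a prescribed neighbourhood of $\eta_2$; this is the step where transitivity and north--south dynamics genuinely interact, and the closure/limit argument you allude to then takes place in the compact stabiliser of a vertex on the axis. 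In $(4)\Rightarrow(1)$, the claim that ``any residual translation \ldots\ can be cancelled by composing with an element of the stabiliser of $(\mu,\xi_2)$'' needs justification: double transitivity gives you \emph{some} hyperbolic element with axis $(\mu,\xi_2)$, but a priori only of a fixed translation length $\tau$, and there is no reason the residual displacement $d(v,gv)$ should be a multiple of $\tau$. One way around this is to also use the orientation-reversing element of $H$ swapping $\mu$ and $\xi_2$ (again furnished by double transitivity), which together with the translations lets you realise reflections centred at enough vertices of the axis; alternatively one can bypass the issue by proving $(4)\Rightarrow(2)$ via a compactness argument in $\stab_H(v)$ analogous to the one you used for $(1)\Rightarrow(2)$.
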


\begin{proof}[Proof of Corollary~\ref{cor:locally_inf_trans}]  The implication (1)$\Rightarrow$(2)
is in \cite[Corollary~26]{Burger_Monod_GAFA}, while (2)$\Rightarrow$(3) is obvious.

To show that (3)$\Rightarrow$(1) observe that since $\TT_1$ and $\TT_2$ are regular 
and $\Gamma<\aut(\TT_1)\times\aut(\TT_2)$ is cocompact, then $\Gamma$-action on $\TT_i$ is cofinite.
If $H_i:=\overline{\pr_i(\Gamma)}^Z$, the projection $\pr_i:\aut(\TT_1)\times\aut(\TT_2)\to\aut(\TT_i)$
induces an equivariant map $(\aut(\TT_1)\times\aut(\TT_2))/\Gamma\to\aut(\TT_i)/H_i$,
so that on $\aut(\TT_i)/H_i$ there is a finite invariant measure.  
If $H_i$ were to fix a point $\xi\in\partial\TT_i$, it would be amenable 
(see for example \cite[Chapter~1, Section~8]{FigaTalamanca_Nebbia}) and because of the existence of a finite
invariant measure on $\aut(\TT_i)/H_i$, $\aut(\TT_i)$ would be amenable as well.
Hence $H_i$ does not fix any point $\xi\in\partial\TT_i$,
so that by Lemma~\ref{lem:cofinite+nofixedpoints=minimal} the action on $\TT_i$ is minimal.
Thus the $\Gamma$-action on $\TT_i$ satisfies the hypothesis of Theorem~\ref{thm:classification}
and we must be in case (i).  By Lemma~\ref{lem:transitive_on_spheres} there exists a vertex $x\in\TT_i$ such that 
$\stab_\Gamma(x)$ acts transitively on all spheres centered at $x$.  
By continuity $\stab_{H_i}(x)$ acts transitively on $\partial\TT_i$ and hence so does $H_i$.
Since $H_i$ is non-compact, by the implication (3)$\Rightarrow$(1) of Lemma~\ref{lem:3.3.1},
$H_i$ is locally $\infty$-transitive.
\end{proof}

\section{Examples}\label{examples}
We point out here that our Theorem~\ref{thm:classification} is not covered by the results
of Fujiwara~\cite{Fujiwara} and of Minasyan and Osin~\cite{MO}.

\subsection{Amalgamated free product of groups}
Theorem 1.1~\cite{Fujiwara} states that if  $G=A\ast_C B$ is such that $|C\backslash A/C|\geq 3$
and $|B/C|\geq 2$ then there exists an injective $\mathbb{R}$-linear map $\ell^1\rightarrow \hb^2(G,\mathbb{R})$.

We now show how to deduce Fujiwara's result from Theorem \ref{thm:classification}.

According to Bass-Serre theory, $G$ acts on a tree $\TT$ whose vertices are the left $G$-cosets of $A$ and $B$ and whose edges are the left $G$-cosets of $C$.

If $|B/C|=2$, as explained in Remark~\ref{val2}, we replace $\TT$ with the tree $\TT'$ obtained merging edges that share a vertex of degree 2, if not we set $\TT=\TT'$. Then $\TT'$ satisfies the conditions of Theorem~\ref{thm:classification}. We now claim that if $1,a_1,a_2\in A$ lie in pairwise distinct double-cosets, then the oriented geodesics $[B,a_1B],[B,a_2B]$ in $\TT$ are not in the same $G$-orbit. This easily implies that $\TT'$ has at least two orbits of geodesics of length 2 and hence Theorem~\ref{thm:classification}-(i) does not hold for $\TT'$. 
To prove the claim, just observe that an element $g\in G$ mapping $[B,a_1B]$ to $[B,a_2B]$ stabilises $[B,A]$, and hence belongs to $C$, and maps the edge labelled $a_1C$ to the one labelled $a_2C$, whence it also belongs to $a_2Ca_1^{-1}$, which contradicts the fact that $a_1,a_2$ are not in the same double coset (see Figure~\ref{tree}). 
\begin{figure}[!htbb]
\centering
\psfrag{a}{$A$}
\psfrag{b}{$B$}
\psfrag{b1}{$a_1B$}
\psfrag{b2}{$a_2B$}
\psfrag{c}{$C$}
\psfrag{c1}{$a_1C$}
\psfrag{c2}{$a_2C$}
\includegraphics[scale=1.2]{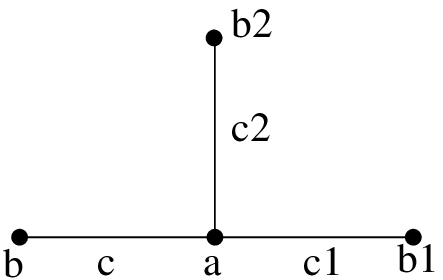}
\caption{}
\label{tree}
\end{figure}
Theorem~\ref{thm:classification}-(ii) is well-known not to hold (for $\TT'$), 
for example because the stabiliser in $\TT$ of each $A$-vertex acts transitively on the edges emanating from it. 
Therefore, condition (iii) of Theorem~\ref{thm:classification} holds and 
we can deduce that  $\hb^2(G,\mathbb{R})$ contains a copy of $\ell^1$.

Our result is more general even when restricted to amalgamated products. Indeed, let us consider
the group $S_3\ast_{\mathbb{Z}_2}\mathbb{Z}_4$ 
where $S_3$ is the symmetric group on $3$ elements and its associated tree $\TT$. 
Since the stabilizer of each vertex of $\TT$ is a finite group, it easily follows that
conditions (i) and (ii) of Theorem~\ref{thm:classification} cannot be satisfied. 
Hence, $\hb^2(S_3\ast_{\mathbb{Z}_2}\mathbb{Z}_4,\mathbb{R})$ is infinite dimensional, 
a fact that can also be deduced from the fact that it  virtually free. 
However, it cannot be deduced from Fujiwara's result because 
$|\mathbb{Z}_2\backslash S_3/\mathbb{Z}_2|\geq 3$ does not hold.

\subsubsection{A family of examples}  Based on \cite[Example 1.2.1]{Burger_Mozes_gat} we can construct an infinite family of examples
of (cocompact) irreducible lattices $\Gamma<\PSL(n,\QQ_{p_1})\times\PSL(n,\QQ_{p_2})$,
where $p_1,p_2$ are primes and $n\geq3$.  We illustrate the example for $n=3$.

For $\ell\in\{p_1,p_2\}$ we consider the Bruhat--Tits building $\Delta_\ell$ associated to $\PSL(3,\QQ_\ell)$,
that we proceed to recall (see \cite[Chapter~V, 8B]{Brown}).  Let $V$ be a $3$-dimensional vector space over $\QQ_\ell$.
Let $\Ll'$ be the space of lattices $L\subset V$, 
that is of sub-$\ZZ_\ell$-modules $L=\ZZ_\ell v_1+\ZZ_\ell v_2+\ZZ_\ell v_3$,
where $\{v_1,v_2,v_3\}$ is a $\QQ_\ell$-basis of $V$. 
We consider on $\Ll'$ an equivalence relation where $L_1\sim L_2$ if
there exists $\lambda\in\QQ_\ell^*$ such that $L_1=\lambda L_2$,
and let $\Ll:=\Ll'/\sim$.  On $\Ll$ we define an incidence relation, where we
say that $[L_1], [L_2]\in\Ll$ are \emph{incident} if $[L_1]\neq[L_2]$ and 
there exists representatives $L_1,L_2$
such that $\ell L_1\subset L_2\subset L_1$.  Consider now the \emph{flag complex},
that is the simplicial
complex whose $0$-cells are equivalence classes of lattices $[L]\in\Ll$,
whose $1$-cells are pairs of incident $0$-cells and 
whose $2$-cells are triples of pairwise incident $0$-cells.
We denote  $\Delta_\ell$ the flag complex with the natural $\PSL(3,\QQ_\ell)$-action.

To an $0$-cell $[L]\in\Ll$ we can associate a \emph{type}
as follows.  Let $L$ be a representative of $[L]$ with 
$L=\ZZ_\ell v_1+\ZZ_\ell v_2+\ZZ_\ell v_3$; if $\det(v_1,v_2,v_3)=\ell^nk$,
where $k\in\ZZ_\ell^\times$, $n\in\ZZ$, then the type of $[L]$ is $n\mod3$.
It is easy to see that the type of an $0$-cell is well defined.  As an example,
the type of 
$\ZZ_\ell e_1+\ZZ_\ell e_2+\ZZ_\ell e_3$
is $0$, the type of
$\ZZ_\ell e_1+\ZZ_\ell e_2+\ZZ_\ell\ell e_3$
is $1$, while the type of 
$\ZZ_\ell e_1+\ZZ_\ell\ell e_2+\ZZ_\ell\ell e_3$
is $2$,
where $e_1,e_2,e_3$ are the standard basis elements.
In addition to $3$ types of $0$-cells, there are also $3$-types of $1$-cells.
Let $\Gg_\ell$ be the subgraph of $\Delta_\ell$ consisting of edges of a given type, for example $(02)$.
Fix a vertex $[L_0]$ in $\Gg_\ell$, for example of type $0$, $L_0=\ZZ_\ell e_1+\ZZ_\ell e_2+\ZZ_\ell e_3$.
The vertices $[L]$ incident to $[L_0]$ are all of type $2$ and, by definition, all satisfy the condition
that $\ell L_0\subset L\subset L_0$.  In other words they are in one-to-one correspondence with 
subspaces (in this case lines) in $L_0/\ell L_0\simeq\FF_\ell^3$.
The stabiliser of $[L_0]$ in $\PSL(3,\QQ_\ell)$ is $\PSL(3,\ZZ_\ell)$ and the effective action 
of $\stab_{\PSL(3,\QQ_\ell)}([L_0])=\PSL(3,\ZZ_\ell)$ on the $\ell^2+\ell+1$ vertices incident to $[L_0]$ 
is the action of $\PSL(3,\FF_\ell)$ on the $\ell^2+\ell+1$ lines in $\FF_\ell^3$.
Since $\PSL(3,\FF_\ell)$ is doubly transitive on the set of pairs of lines, 
it follows that $\stab_{\PSL(3,\QQ_\ell)}([L_0])$ is doubly transitive on the set of 
incident vertices. It is easily shown that, given an action on a tree, if all vertex stabilisers are doubly transitive on the sphere of radius 1 around the corresponding point, then all stabilisers act transitively on the sphere of radius 2.

We have hence given a sketch of the proof the following:

\begin{lemma}\label{lem:two-transitive}  The $\PSL(3,\QQ_\ell)$-action on the graph $\Gg_\ell$
is locally $2$-transitive.
\end{lemma}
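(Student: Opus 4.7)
The plan is to verify that every vertex stabiliser in $\PSL(3,\QQ_\ell)$ acts $2$-transitively on the $1$-sphere of the corresponding vertex in $\Gg_\ell$, and then to invoke the observation recorded just before the lemma: in any tree action for which each vertex stabiliser is $2$-transitive on its $1$-sphere, each vertex stabiliser is transitive on its $2$-sphere. Combining these yields transitivity of every vertex stabiliser on both the $1$-sphere and the $2$-sphere, which is exactly local $2$-transitivity. As a preliminary step I would record the standard fact that $\Gg_\ell$ is a regular tree in which every vertex has valence $\ell^2+\ell+1$, citing the Bruhat--Tits theory of $\PSL(3,\QQ_\ell)$ (see for instance \cite[Example~1.2.1]{Burger_Mozes_gat}).

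The $2$-transitivity of the stabiliser of a type-$0$ vertex $[L_0]$ on its $1$-sphere is essentially contained in the paragraph preceding the lemma statement: the adjacent vertices are in bijection with lines in $L_0/\ell L_0 \cong \FF_\ell^3$, the action of $\stab_{\PSL(3,\QQ_\ell)}([L_0]) = \PSL(3,\ZZ_\ell)$ on them factors through $\PSL(3,\FF_\ell)$, and this group is $2$-transitive on $\mathbb{P}^2(\FF_\ell)$. The dual case of a type-$2$ vertex $[L]$ runs in parallel: the adjacent type-$0$ vertices $[L']$ are those with $L\subset L'\subset \ell^{-1}L$ and $L'/L$ one-dimensional, so they are in bijection with lines in $(\ell^{-1}L)/L \cong \FF_\ell^3$, and the stabiliser of $[L]$ acts on them through $\PSL(3,\FF_\ell)$, once again $2$-transitively on the associated projective plane.

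With both vertex types handled, the auxiliary observation from the excerpt upgrades $2$-transitivity on $1$-spheres to transitivity on $2$-spheres, completing the proof. Most of the content is already assembled in the setup preceding the lemma, so the only genuinely new ingredient is the symmetric treatment of the type-$2$ vertices, which is routine. If I wished to be fully self-contained I would also spell out the easy argument behind the auxiliary observation: given two vertices $w,w'$ at distance $2$ from $v$, first apply transitivity of $\stab(v)$ on its $1$-sphere to move the neighbour of $v$ on $[v,w]$ onto the neighbour on $[v,w']$, call it $u$; then $2$-transitivity of $\stab(u)$ on its $1$-sphere, applied to the ordered pairs $(v,w)$ and $(v,w')$, produces an element of $\stab(u)\cap \stab(v)$ sending $w$ to $w'$. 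No significant obstacle is anticipated.
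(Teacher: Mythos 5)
Your proposal is correct and follows essentially the same route as the paper, which itself only sketches the argument in the paragraphs preceding the lemma: identify the $1$-sphere of a vertex $[L_0]$ with the lines in $L_0/\ell L_0\simeq\FF_\ell^3$, use double transitivity of $\PSL(3,\FF_\ell)$ on these lines, and then upgrade to transitivity on $2$-spheres via the auxiliary observation. The only additions you make are to treat the type-$2$ vertices explicitly (the paper leaves this symmetric case implicit) and to write out the easy upgrade argument, both of which are consistent with the paper's sketch.
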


Let $\TT_{\ell^2+\ell+1}$ be the regular tree that is the universal covering of $\Gg_\ell$.
Let 
\begin{equation*}
\xymatrix{
0\ar[r]
&\pi_1(\Gg_\ell)\ar@{^(->}[r]
&H_\ell\ar@{->>}[r]
&\PSL(3,\QQ_\ell)\ar[r]
&0
}
\end{equation*}
be the exact sequence associated to the universal covering projection
$\pi:\TT_{\ell^2+\ell+1}\to\Gg_\ell$, that is
\begin{equation*}
H_\ell:=\{g\in\aut(\TT_{\ell^2+\ell+1}):\,\pi\circ g= h\circ\pi\text{ for some }h\in\PSL(3,\QQ_\ell)\}\,,
\end{equation*}
with
\begin{equation}\label{eq:normal}
\pi_1(\Gg_\ell)=\{g\in\aut(\TT_{\ell^2+\ell+1}):\,\pi\circ g= \pi\}\triangleleft H_\ell\,.
\end{equation}

The following lemma shows that $H_\ell$ cannot be locally $\infty$-transitive.

\begin{lemma}\label{lem:locally_infty_transitive}  Let $\TT$ be a locally finite tree and let $H<\aut(\TT)$ 
be a closed locally $\infty$-transitive subgroup.  Any normal discrete subgroup $N\triangleleft H$
consisting of hyperbolic elements must be trivial.
\end{lemma}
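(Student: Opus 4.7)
The strategy is to argue by contradiction: assume there exists $n\in N\setminus\{e\}$, necessarily hyperbolic, with axis $A_n$ and endpoints $\xi,\eta\in\partial\TT$, then derive a contradiction from the interaction between discreteness of $N$ and $2$-transitivity of $H$ on $\partial\TT$.

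First, I would observe that the conjugation map $\phi_n\colon H\to H$, $h\mapsto hnh^{-1}$, is continuous, and by normality of $N$ its image sits inside $N$. Since $N$ is discrete the singleton $\{n\}$ is open in $N$, so its preimage $\phi_n^{-1}(\{n\})=C_H(n)$ is an open subgroup of $H$. This is the one place where both hypotheses on $N$ (normal and discrete) are used, and it is the key leverage for the argument. Since any element commuting with $n$ preserves its axis $A_n$ setwise and hence permutes $\{\xi,\eta\}$, we obtain
\begin{equation*}
C_H(n)\subseteq\stab_H(\{\xi,\eta\}),
\end{equation*}
so the latter subgroup is also open in $H$.

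Next I would invoke Lemma~\ref{lem:3.3.1} to conclude that $H$ acts $2$-transitively on $\partial\TT$, so the $H$-orbit of $\{\xi,\eta\}$ is the entire set of unordered pairs of distinct boundary points. Because $\TT$ is locally finite and carries a locally $\infty$-transitive action (which forces every vertex to have valence at least $3$ up to the trivial cases excluded by Lemma~\ref{lem:3.3.1}), the boundary $\partial\TT$ is a Cantor-type space and in particular uncountable; hence the orbit, and therefore the index $[H:\stab_H(\{\xi,\eta\})]$, is uncountable.

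The contradiction now comes from the topology. Since $\TT$ is locally finite, its vertex set is countable, and the topology on $\aut(\TT)$ admits the countable sub-basis $\{g:g(v)=w\}$ indexed by pairs of vertices $(v,w)$; thus $\aut(\TT)$ is second countable, and so is its closed subgroup $H$. In a second countable topological group every open subgroup has at most countably many cosets, because these form a family of pairwise disjoint non-empty open sets and any such family is countable (each set must contain a distinct element of the countable base). This contradicts the uncountable index $[H:\stab_H(\{\xi,\eta\})]$ obtained above, forcing $N=\{e\}$. The main obstacle is conceptual rather than computational, namely recognizing that discreteness of $N$ upgrades the centralizer of any non-trivial element from a merely closed subgroup to an open one; once this is in hand, the $2$-transitivity of Lemma~\ref{lem:3.3.1} does the rest.
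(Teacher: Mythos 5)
Your proof is correct and follows essentially the same route as the paper's: both derive a contradiction between the uncountability of the $H$-orbit of the axis of a non-trivial $n\in N$ (forced by double transitivity on $\partial\TT$, via Lemma~\ref{lem:3.3.1}) and a countability constraint coming from discreteness of $N$ together with second countability of $H$. The only difference is packaging: the paper notes that $N$, being discrete, is countable, so the $H$-invariant set of axes $\{a_n\}$ is countable, whereas you route through openness of $C_H(n)$ and the countable index of open subgroups of a second countable group -- these are interchangeable.
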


\begin{proof} Since it is discrete, the group $N$ is countable and, in particular, the set of axes $\{a_n:\,n\in N,n\neq e\}$
is countable.  If $h\in H$, we have that $h(a_n)=a_{hnh^{-1}}$.  But Lemma~\ref{lem:3.3.1} (1)$\Rightarrow$(4) implies that
$H$ acts doubly transitive on $\partial\TT$, so that $\{h(a_n):\,h\in H\}$ would be uncountable.
\end{proof}

Let us consider now an irreducible (cocompact) lattice 
$$\Gamma<\PSL(3,\QQ_{p_1})\times\PSL(3,\QQ_{p_2})$$
and let us consider the inverse image $\widetilde\Gamma<H_{p_1}\times H_{p_2}$ of $\Gamma$ with respect to the exact sequence
\begin{equation*}
\xymatrix@1{
\pi_1(\Gg_{p_1})\times\pi_1(\Gg_{p_2})\,\ar@{^(->}[r]
&H_{p_1}\times H_{p_2}\ar@{->>}[r]
&\PSL(3,\QQ_{p_1})\times\PSL(3,\QQ_{p_2})
}
\end{equation*}
Since $\Gamma$ is irreducible, it has dense projections and hence, by Lemma~\ref{lem:two-transitive}
the action of $\pr_i(\Gamma)$ on $\Gg_{p_i}$ is locally $2$-transitive, for $i=1,2$.  
It follows that also the action of $\pr_i(\widetilde\Gamma)$ on $\TT_{p_i^2+p_i+1}$ is locally $2$-transitive;
however by Lemma~\ref{lem:locally_infty_transitive}, the closures of these projections are not locally $\infty$-transitive.
%Hence \cite[Proposition~1.2.1]{Burger_Mozes_gat} applies and \cite[Example~1.2.1(b)]{Burger_Mozes_gat}
%together with \cite[Proposition~3.1.2]{Burger_Mozes_gat} shows that $\widetilde\Gamma$ is not
%locally $\infty$-transitive.  

By Corollary~\ref{cor:locally_inf_trans} we deduce that $\widetilde\Gamma$
has an infinite dimensional set of linearly independent median quasimorphisms.  Notice that $\Gamma$,
being an irreducible lattice in a high rank Lie group, has no non-trivial quasimorphisms, \cite{Burger_Monod_GAFA}.

The same construction can of course being performed for an irreducible lattice 
$\Gamma<\PSL(n,\QQ_{p_1})\times\PSL(n,\QQ_{p_2})$, for any $n\geq3$, in which case
we will have an $(n-1)$-dimensional flag complex.

Notice that Fujiwara's condition on the number of double cosets is in particular incompatible
with the local $2$-transitivity of $\widetilde\Gamma$.  
In fact, if we realise 
$\widetilde\Gamma$ as the product of $\stab_{\widetilde\Gamma}(v_1)$ and $\stab_{\widetilde\Gamma}(v_2)$,
amalgamated over $\stab_{\widetilde\Gamma}(v_1,v_2)$,  the local $2$-transitivity is equivalent to 
$\big|\stab_{\widetilde\Gamma}(v_1,v_2)\backslash\stab_{\widetilde\Gamma}(v_i)/\stab_{\widetilde\Gamma}(v_1,v_2)\big|=2$,
for $i=1,2$.

\subsection{Acylindrical hyperbolicity}
Minasyan and Osin~\cite{MO} gave a criterion for groups acting on a tree to be acylindrically hyperbolic (see~\cite{Osin} for the definition and equivalent characterizations). More precisely, they state that if a group $G$ acts minimally on a simplicial tree $\TT$, it  does not fix any point of $\partial \TT$, and there exist vertices
$u$, $v$ of $\TT$ such that the pointwise stabilizer $\text{PStab}_G\{u, v\}$
is finite, then $G$ is either virtually cyclic or acylindrically hyperbolic.
In particular, if the group is acylindrically hyperbolic then $\hb^2(G,\mathbb{R})$ is infinite dimensional~\cite{HullOsin}.

Condition (iii) of Theorem~\ref{thm:classification} includes groups which are not covered by \cite{MO}. For instance, let us consider the Caley graph $\TT$ of $\mathbb{F}_3=\langle a, b, c \rangle$. Let $G_{bc}$ be an infinite subgroup of $\aut(\TT)$ acting on the subtree $\TT_{bc}$ containing the vertex corresponding to the identity and edges labelled by $b$ and $c$ only, and suppose that $G_{bc}$ fixes $\TT\setminus \TT_{bc}$.
Let $\Gamma$ be the group of automorphisms generated by $\mathbb{F}_3$ and $G_{bc}$.
By construction there exist at least two orbits of edges and $\Gamma$ does not fix a point in $\partial \TT$, therefore condition (iii) of Theorem~\ref{thm:classification} holds.
On the other hand it is easily seen that the stabilizer of each geodesic contains a conjugate of $G_{bc}$ and is therefore infinite, so that Theorem 2.1 in~\cite{MO} does not apply.

\bibliographystyle{alpha}
\bibliography{biblio}

\end{document}